\titleformat{\subsubsection}[runin]
	{\normalfont\normalsize\bfseries\filcenter}{\thesubsubsection.}{1 ex}{}
\declaretheoremstyle[headpunct={ --- },headfont=\normalfont\itshape]{myremark}
\declaretheoremstyle[bodyfont=\normalfont]{mydefinition}
\declaretheorem[name=Theorem,within=section]{Thm}
\declaretheorem[within=section,name=Lemma]{Lem}
\declaretheorem[sibling=Lem,name=Proposition]{Prop}
\declaretheorem[style=myremark,sibling=Lem,name=Remark]{Rem}
\declaretheorem[style=mydefinition,sibling=Lem,name=Definition]{Def}
\declaretheorem[style=mydefinition,sibling=Lem,name=Example]{Ex}
\declaretheorem[style=mydefinition,sibling=Lem,name=Problem]{Prob}
\newcommand{\iprod}[2]{\left\langle {#1},{#2}\right\rangle}
\newcommand{\R}{\ensuremath{\mathbb{R}}}		
\newcommand{\C}{\ensuremath{\mathbb{C}}}		
\newcommand{\A}{\ensuremath{\mathbb{A}}}
\newcommand{\E}{\ensuremath{\mathbb{E}}}
\newcommand{\K}{\ensuremath{\mathbb{K}}}
\newcommand{\supp}{\operatorname{supp}}
\newcommand{\Ker}{\operatorname{Ker}}
\newcommand{\Tr}{\operatorname{Tr}}
\newcommand{\Var}{\operatorname{Var}}
\newcommand{\spn}{\operatorname{span}}
\newcommand{\aff}{\operatorname{aff}}
\newcommand{\calC}{\mathcal{C}}
\newcommand{\calI}{\mathcal{I}}
\newcommand{\calL}{\mathcal{L}}
\newcommand{\mA}{\mathbf{A}}
\newcommand{\vu}{\mathbf{u}}
\newcommand{\vv}{\mathbf{v}}
\providecommand*{\diff}%
        {\@ifnextchar^{\DIfF}{\DIfF^{}}}
\def\DIfF^#1{%
        \mathop{\mathrm{\mathstrut d}}%
                \nolimits^{#1}\gobblespace
}
\def\gobblespace{%
        \futurelet\diffarg\opspace}
\def\opspace{%
        \let\DiffSpace\!%
        \ifx\diffarg(%
                \let\DiffSpace\relax
        \else
                \ifx\diffarg\[%
                        \let\DiffSpace\relax
                \else
                        \ifx\diffarg\{%
                                \let\DiffSpace\relax
                        \fi\fi\fi\DiffSpace}
\begin{document}

\title{Matroid Regression}

\author{Franz J. Király \thanks{Department of Statistical Science, Univerity College London, and MFO
	 \url{f.kiraly@ucl.ac.uk}} \and
	 Louis Theran\thanks{Inst. Math., AG Diskrete Geometrie, Freie Universität Berlin, \url{theran@math.fu-berlin.de}}}

\date{}

\maketitle

\begin{abstract}
\begin{normalsize}
We propose an algebraic combinatorial method for solving large sparse
linear systems of equations locally - that is, a method which can compute single evaluations of
the signal without computing the whole signal. The method scales only in the sparsity of the
system and not in its size, and allows to provide error estimates for any solution method.
At the heart of our approach is the so-called regression matroid, a combinatorial object associated
to sparsity patterns, which allows to replace inversion of the large matrix with the inversion of a
kernel matrix that is constant size. We show that our method provides the best linear unbiased estimator (BLUE) 
for this setting and the minimum variance unbiased estimator (MVUE) under Gaussian noise assumptions, 
and furthermore we show that the size of the kernel matrix which is to be inverted can be traded off with accuracy.
\end{normalsize}
\end{abstract}

\section{Introduction}
Sparse linear systems are a recurring topic in modern science. They occur in a wide variety of contexts such as numerical analysis, medical imaging, control theory or signal processing. Of particular interest are sparse linear systems of big size - that is, a number of equations which is of the order of thousands, millions, billions or more - since they occur in practice, e.g. in linear inverse problems such as tomography, or analysis of large scale data with sparse structure as they occur in recommender systems or network analysis.\\

Whole areas of research, spanning disciplines in most areas of science, have been devoted to the end of solving linear systems of equations $Ax = b$ where $A$ is huge and sparse. A selection of books on the topic, in which numerical solution strategies are outlined, and which is far from being representative, includes~\cite{Tewarson73,Hackbusch1993,Barrett1994,Saad2003,Davis06}. Further important is the area in medical imaging which is concerned with sparse linear systems specifically arising from certain geometries in tomography, compare the algebraic approaches in ~\cite{Herman1980,Kak1988}, for which specific techniques have been developed. Moreover, we would like to mention that sparse matrices and their spectral properties also appear as a recurring topic in networks, see e.g.~\cite{Chung1997}.

Regarding the huge corpus of existing literature, we would, however, like to stress one fact: the state-of-the-art methods and theories mostly make use of spectral or analytical properties of the huge matrices; efficient methods which use particular structure - be it algebraic or combinatorial - of the sparse system of equations, seem not to be available. Furthermore, all methods usually seek a complete solution of the system in terms of $x$, while scenarios where some projection $P x$ with $P$ a matrix and $P x$ of tractable size might be desirable - e.g., if in the tomography scenario, only part of the scanned region needs a high resolution, or in the networks scenario, where only part of the network might be of, say, predictive interest. Similarly, in the recommender systems scenario, it is more natural to make a recommendation for a single item instead of making all possible recommendations at once.\\

In this paper, we propose theoretical foundations and practical methods to address this kind of problem, which have the potential advantage of scaling with the row-size of $P$ instead of the size of $A$. That is, optimally the method will have a running time that does not scale with the size of $A$, only with certain sparsity properties of $A$ which in many practical scenarios scale constant with respect to the size of $A$. The only assumption we will need for this to work is that there exist a sufficient number of linear dependencies of rows of $A$ which are sparse in their coefficient representation. This is frequently the case if $A$ has intrinsic combinatorial meaning, or is highly structured otherwise.\\

The central ingredient is the notion of regression matroid, which provides a kind of dictionary for minimal such dependencies (= circuits), and the circuit kernel matrix, which is the covariance matrix between the circuits. Restricting to small circuits in a "neighborhood" of $P$, we are able to obtain a least squares estimator for $P x$ where the most costly ingredient is inversion of the circuit kernel matrix - which scales with the size and number the circuits, and not the size of $A$. Therefore, through choosing the circuits - interpreted as the ``locality'' parameter of matroid regression - we also obtain a tool of trading off accuracy of the solution with computational cost.

More concisely, our main contributions are:
\begin{itemize}
\item the notions of {\bf matroid regression} and {\bf circuit kernel}, capturing algebraic combinatorial properties of the linear system
\item an {\bf explicit algorithm} computing a variance minimizing estimator for the evaluation $P x$
\item an explicit form for the {\bf variance of that estimator} which depends not on $x$ but only on the noise model
\item an {\bf explicit algorithm} to compute that variance without computing $P x$
\item {\bf proofs of optimality} and universality for the estimator (BLUE in general, MVUE for Gaussian noise and for unknown noise)
\item a proof of the error being monotonous in the ``locality'' of the estimate, yielding a {\bf complexity-accuracy-tradeoff}
\item {\bf characterization of the regression matroid} in some cases, including potential measurements, 2-sparse vectors, rank one matrix completion; explanation how in these cases circuits and combinatorial properties of {\bf characteristic graphs} relate
\end{itemize}
Our framework also explains some particular findings in the case of matrix completion~\cite{KirThe13Rank1}, which we can reproduce by reduction to a sparse linear system, and solves open questions about the optimality of the estimators raised in~\cite{KirThe13Rank1}. In the same sense, we hypothesize that the matroid regression methods have a rather general and natural extension to the non-linear case.

\section{Structured linear estimation}\label{sec:theory}
We will consider two compressed sensing problems which are dual to each other.  In the sequel, the
field $\K$ is always one of $\C$ or $\R$, and the parameter $n$ will be the signal dimension.
\begin{Prob}[name={Primal problem \textbf{(P)}}]\label{Prob:primal}
There is an unknown signal $x\in \K^n$ observed via a
\emph{linear measurement process}:
\begin{equation}\label{Eq:primal}
	b = Ax + \varepsilon
\end{equation}
The noise $\varepsilon$ is centered and has finite variance, and the matrix $A\in\K^{N\times n }$ is known.
The task is to compute a \emph{linear evaluation} $\gamma = \iprod{w}{x}$, for a known $w\neq 0$ in the
row-span $\spn A$.
\end{Prob}
In general, $n$ will be large, and potentially $N\gg n$, but $A$ will be either \emph{sparse},
\emph{structured} or both.  This means that simply inverting an $n\times n$
sub-matrix of $A$ is not a good solution.  Instead, we will show how to use the structure
of $A$ to find solutions \emph{locally}, using very few coordinates of $b$ or both.

\begin{Prob}[name={Dual problem \textbf{(D)}}]
There is an unknown signal $y\in \K^n$ and an unknown scalar $\gamma$, satisfying a constraint
\begin{equation}
	\gamma w' = A' y
\end{equation}
The task is to estimate $\gamma$ from observations $b = y + \varepsilon$, with $A'\in \K^{N\times n}$
and $w'$ known.
\end{Prob}
Since the Dual Problem (D) can be treated with the same methods, we will
focus on the Primal problem (P).

\subsection{The Problems in Context}
We interpret the general problem (P) as a \emph{supervised learning} problem.
To see this, take the rows $a_1,a_2,\ldots, a_N$ to be training data
points and the coordinates $b_1,\ldots, b_N$ of $b$ to be training labels.
The unknown vector $x$ is then the regressor, and the learning tasks
can be: (i) imputation of single coordinates of $x$; (ii) prediction
of the label of a new point $w$; (iii) denoising, which corresponds to
$w$ being one of the $a_i$; among others.

Alternatively, even though $N$ is typically quite large, so that $x$ is
not compressed in the classic sense, problem (P) can be interpreted
in terms of \emph{compressed sensing}.  Here, the task is to use as
few coordinates of $b$ as possible to estimate $\iprod{w}{x}$ accurately.
This should be contrasted with the approach of computing the (pseudo-)inverse
of $A$, e.g., for i.i.d.~noise the estimator $w^\top A^{-1}b$.

\subsection{Example Instances}
To fix, the concept, we show how to cast some scenarios in terms of problem (P).

\begin{Ex}[name={Measuring potentials}]\label{Ex:potentials}
The task is to measure from an unknown potential $x$, given a set of
measurements.  The rows of $A$ are of the form $e_j - e_i$, where
$\{e_i : i\in [n]\}$ are the standard basis vectors of $\K^n$.  The
vector $w$ is also of this form.
\end{Ex}

\begin{Ex}[name={Rank 1 Matrix Completion}]\label{Ex:matrix-completion}
The task is to impute or denoise the entry at position $(i,j)$ in a partially-observed, $m\times n$
rank $1$ matrix $\mA = \vu\cdot\vv^{\top}$.  The vector $x$ is the concatenation of the
entry-wise logarithms of $\vu$ and $\vv$; the vector $e_{j + m} - e_i$ is a row of $A$
if the position $(i,j)$ is observed; the vector $b$ is the vectorization of the set of
observed entries; $w = e_j - e_i$ where $(i,j)$ is the position of the entry to impute
or de-noise.
\end{Ex}

\begin{Ex}[name={Discrete tomography}]\label{Ex:tomography}
The task is to reconstruct a bitmap image (subset of a lattice in Euclidean space) from
a number of projections.  The matrix $A$ has a decaying spectrum and $w$ describes a regularized region of interest.
\end{Ex}

\section{Regression matroids}
Our strategy for solving the problem (P) will be to exploit the \emph{structure} of the constraint matrix
$A$.  The object that captures this is the \emph{regression matroid} of $A$ and $w$, which we now define.
\begin{Def}
Let $a_1,\dots, a_N\in\K^n$ be a collection of vectors, let $w\in\K^n$ be a target vector.
\begin{description}
\item{(i)} The (linear) \emph{regression matroid} associated to the $a_i$ and $w$ is the pair $([N],\calI)$, where
\[
	\calI := \left\{ I\subseteq [N] : \text{the set $\{w\}\cup \{a_i : i\in I\}$ is linearly independent}\right\}
\]
and we write $a_\ast := w$. We will denote the matroid by $L (w|a_1,\dots, a_N):=([N],\calI)$. If $A$ is the matrix having
$a_i$ as $i$-th row, we simply write $L (w|A)$.
\item{(ii)} A set $C\subseteq [N]$ with $\ast\in C$ is called (linear) \emph{particular regression circuit} of $L(w|A)$, if the equation
$ w = \sum_{i\in C} \lambda_i a_i$
implies $\lambda_i\neq 0$ for all $i\in C$.
\item{(iii)} A set $C\subseteq [N]$ is called \emph{general regression circuit} of $L(w|A)$, if it is a regression circuit of $L(0|A)$.
\end{description}
\end{Def}
In matroid terms, the regression matroid is the elementary quotient of the linear matroid of $A$ by
the element $w$. Note that a set $C\subseteq [N]$ can not be both a particular and general regression circuit. Also, if $w$ is one of the $a_i$, then $\{i\}$  is a particular regression circuit. An extension to more than one target vector is straightforward, but for simplicity, we continue with only the single target vector $w$.

If $\lambda\in \K^n$ is a vector, we say that the \emph{support} of $\lambda$ is the
set $\{i\in [N]: \lambda_i\neq 0\}$.  Circuits and regression circuits correspond to
linear dependencies with minimal support.

\begin{Prop}\label{Prop:circdual}
Let $L(w|A)$ be a regression matroid. Then:
\begin{itemize}
	\item[(i)] $C\subseteq [N]$ is a particular regression circuit if and only if there is a
		unique vector $\lambda\in \K^N$ supported on $C$ such that $w = \lambda A$.
	\item[(ii)] $C\subseteq [N]$ is a general regression circuit if and only if
		there is a unique, up to scalar multiplication, vector $\lambda\in \K^n$ supported on $C$
		such that $\lambda A = 0$.
\end{itemize}
\end{Prop}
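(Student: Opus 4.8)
The plan is to unwind both parts of the Definition and reduce everything to one elementary fact about solution sets of linear systems over the infinite field $\K$. Write $\lambda$ for a row vector in $\K^N$ and $\lambda A$ for $\sum_i\lambda_i a_i$. The basic observation is that $w=\lambda A=\mu A$ holds for two vectors if and only if their difference lies in the left kernel $K:=\{\nu\in\K^N:\nu A=0\}$; consequently $S_C:=\{\lambda:\supp\lambda\subseteq C,\ w=\lambda A\}$ is either empty or an affine translate of the linear subspace $K_C:=\{\nu\in K:\supp\nu\subseteq C\}$, while in the homogeneous situation of part (ii) the relevant solution set is $K_C$ itself. I will also read ``$\lambda$ supported on $C$'' as ``$\supp\lambda=C$'': this is forced, because the affine system $w=\lambda A$ can have a unique solution whose support is a proper subset of $C$, which would make the weaker reading of (i) false.

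For (i) I would prove both directions through the claim: $C$ is a particular regression circuit if and only if $S_C$ is nonempty and \emph{every} element of $S_C$ has support exactly $C$ (the right-hand side here is precisely the Definition, the nonemptiness being what ``$\ast\in C$'' guarantees when read in the matroid, i.e.\ $w\in\spn\{a_i:i\in C\}$). The one non-trivial point is that if $S_C$ has two elements — equivalently $K_C\neq\{0\}$ — then $S_C$ contains an element of strictly smaller support, so $C$ is not a particular regression circuit: given $0\neq\nu\in K_C$ and $j\in\supp\nu$, the line $t\mapsto\lambda-t\nu$ stays in $S_C$ but its $j$-th coordinate vanishes at $t=\lambda_j/\nu_j\in\K$. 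The same ``kill a coordinate'' move shows that when all members of $S_C$ have support $C$, the set $\{a_i:i\in C\}$ is linearly independent, hence $K_C=\{0\}$ and $S_C$ is the single point the Proposition asserts; conversely, given that unique full-support $\lambda$, one has $K_C=\{0\}$ (otherwise translating $\lambda$ along a nonzero kernel direction would yield infinitely many distinct full-support solutions), so $S_C=\{\lambda\}$ and every element of $S_C$ has support $C$, which is the Definition.

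For (ii) I would run the identical argument with $w$ replaced by $0$, via Definition (iii): a general regression circuit of $L(w|A)$ is a particular regression circuit of $L(0|A)$, i.e.\ a circuit of the linear matroid of $A$, a minimal linearly dependent subset of its rows. The only change is that the solution set is now the \emph{linear} subspace $K_C$ rather than an affine one, so ``unique'' weakens to ``unique up to scalar multiplication'', which is exactly the assertion $\dim K_C=1$; and one checks, again by sliding along a generic element of $K_C$ to zero out a coordinate, that $\dim K_C=1$ with a generator of support $C$ is equivalent to every nonzero element of $K_C$ having support $C$, which is the minimality defining a circuit.

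I expect the genuine difficulty to be bookkeeping rather than mathematics: pinning down the conventions — the phantom element $\ast$ with $a_\ast=w$, the identification of a particular regression circuit with a circuit through $\ast$ in the linear matroid of the matrix whose rows are $a_1,\dots,a_N,w$, and the ``support exactly $C$'' reading above — and verifying that the implicational phrasing of the Definition really matches the uniqueness phrasing of the statement. Once these are fixed, both parts rest entirely on the single observation that a nonzero coordinate of a solution can be zeroed out by translating along a kernel direction, which is where $|\K|=\infty$ enters.
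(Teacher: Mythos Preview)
Your proposal is correct and rests on the same core manoeuvre as the paper's proof: given two solutions, form a linear combination that kills a chosen coordinate, thereby producing a solution of strictly smaller support and contradicting minimality. The paper phrases this as taking $\alpha\lambda_2-\lambda_1$ with $\alpha=\lambda_1(j)/\lambda_2(j)$ (landing on $(\alpha-1)w$), while you separate out the kernel direction $\nu=\lambda_1-\lambda_2$ and slide $\lambda\mapsto\lambda-t\nu$ to stay inside $S_C$; these are the same idea up to reparametrisation. Your write-up is in fact more careful than the paper's: you explicitly address both directions of the biconditional, spell out the reduction of (ii) to (i), and flag the ``support exactly $C$'' reading needed to make the converse direction true---points the paper's three-line argument leaves implicit.
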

\begin{proof}
We will prove (i), since the proof of (ii) is similar.  Suppose that
$\lambda_1 A = w$ and $\lambda_2 A = w$ on a set $C$, and let $j\in [N]$ be arbitrary.
Set $\alpha = \lambda_1(j)/\lambda_2(j)$.  Then $(\alpha\lambda_2 - \lambda_1)A = (\alpha-1)w$,
and the support of $(\alpha\lambda_2 - \lambda_1)\subsetneq C$.  In particular, $C$ has minimal
support if and only if $\alpha = 1$.  Since $j$ was arbitrary, we are done.
\end{proof}
Proposition~\ref{Prop:circdual} justifies the following definition:
\begin{Def}
Let $w\in\K^n$, let $A\in\K^{N\times n}$, let $C$ be a particular regression circuit of $L(w|A)$.
We call the unique vector $\lambda$ associated to $C$ by
Proposition~\ref{Prop:circdual} the \emph{circuit vector}
of $C$. Similarly, we may pick a normed
representative to define the \emph{circuit vector} of a general circuit.
For a circuit $C$ of either type (recall that a circuit can be particular or general, but not both), we will use $\lambda_C$ to denote its circuit vector.
\end{Def}

We introduce a definition for formal linear combinations of circuits:

\begin{Def}
For circuits $C_1,\dots, C_m$ and $\alpha_1,\dots, \alpha_m\in \K$, we will define a \emph{circuit divisor} to be a formal linear combination
$$\alpha C_1+\dots + \alpha_m C_m,$$
and associate to it the circuit vector $\alpha_1 \lambda_{C_1}+\dots+\alpha_m \lambda_{C_m}$. We denote the $\K$-vector space of all circuit divisors of $L(w|A)$ by $\calC (w|A)$, and we write $0 = 0C$. Two circuit divisors $D_1,D_2$ are called \emph{linearly equivalent} if their circuit vectors are the same, in which case we write $D_1 \sim D_2$.
\end{Def}

The purpose of this notation is to put an emphasis on the algorithmic process of combining circuits, over the pure consideration of the circuit vector. Indeed, in general, the same circuit vector can be obtained from different formal linear combinations of circuits.

The principal objects for solving the sparse linear system are the respective spans of particular and general regression circuits, which we will term regression space and general circuit space. They can be seen as analogues to the particular and general solutions occurring in the theory of differential equations: for solving the linear system accurately and efficiently, we need to find one particular regression circuit in the regression space, and a sufficient number of general circuits in the general circuit space.
\begin{Def}
Let $L(w|A)$ be a regression matroid.  The \emph{regression space}, is
affine span
\[
	\calC_p (w|A) := \aff\{C \in \calC(w|A) : \text{$C$ is a particular regression circuit}\}
\]
(where $\aff$ denotes the affine hull), and the \emph{general circuit space} is
\[
	\calC_c(w|A) := \spn\{ C \in \calC(w|A) : \text{$C$ is a general regression circuit}\}
\]
Elements of $\calC_p (w|A)$ are called \emph{particular regression divisors}, elements of $\calC_c (w|A)$ are called general regression divisors.
\end{Def}
Since $\calC_p(w|A)$ contains circuit divisors, and not just the circuit vectors, is has richer structure than the left kernel $A$.

The relationship between the two spaces is:
\begin{Lem}\label{Lem:circuit-spaces}
Let $L(w|A)$ be a regression matroid with $w\neq 0$. Then:

\begin{description}
\item[(i)] $\Ker A = \{\lambda_D\;: D\in \calC_c(w|A)\}$
\item[(ii)] Let $C_p$ be
a fixed particular regression circuit vector. Then\\
	$\calC_p(w|A) = \{C : C\sim C_p + D\;\mbox{with}\; D\in \calC_c(w|A)\}$
\end{description}
\end{Lem}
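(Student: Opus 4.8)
The plan is to reduce both statements to linear algebra in $\K^N$ via Proposition~\ref{Prop:circdual}, and to prove (i) first, since it is used in the proof of (ii). Throughout, $\Ker A$ denotes the left kernel $\{\lambda\in\K^N : \lambda A = 0\}$, and I will read the circuit-divisor statements at the level of circuit vectors, i.e.\ up to $\sim$.

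For (i) I would argue by double inclusion. For ``$\supseteq$'': by Proposition~\ref{Prop:circdual}(ii) the circuit vector of each general regression circuit lies in $\Ker A$; since $\Ker A$ is a linear subspace and the circuit vector of $\sum_k \alpha_k C_k$ is $\sum_k \alpha_k \lambda_{C_k}$, every general regression divisor has its circuit vector in $\Ker A$. For ``$\subseteq$'': given a nonzero $\mu\in\Ker A$, the rows $\{a_i : i\in\supp\mu\}$ are linearly dependent, so $\supp\mu$ contains a circuit $C$ of the linear matroid of $A$ — that is, a general regression circuit — with circuit vector $\lambda_C$ supported on $C$; choosing a scalar $\alpha$ so that $\mu-\alpha\lambda_C$ vanishes at some coordinate of $C$ where $\mu$ did not, we get $\mu-\alpha\lambda_C\in\Ker A$ with strictly smaller support. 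Induction on $|\supp\mu|$ (base case $\mu=0=\lambda_{0C}$) writes $\mu$ as a $\K$-linear combination of general regression circuit vectors, hence as $\lambda_D$ for some $D\in\calC_c(w|A)$.

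For (ii) I would again argue by double inclusion. For ``$\subseteq$'': an affine combination $\sum_i\beta_i C_i$ of particular regression circuits ($\sum_i\beta_i=1$) has circuit vector $\lambda=\sum_i\beta_i\lambda_{C_i}$ with $\lambda A=(\sum_i\beta_i)\,w=w$, so $(\lambda-\lambda_{C_p})A=0$; by part (i) there is $D\in\calC_c(w|A)$ with $\lambda_D=\lambda-\lambda_{C_p}$, whence $\sum_i\beta_i C_i\sim C_p+D$. For ``$\supseteq$'': given $C\sim C_p+D$ with $D\in\calC_c(w|A)$, set $\lambda:=\lambda_{C_p}+\lambda_D$, so $\lambda A=w$ by part (i); it then suffices to show $\lambda$ lies in the affine hull of the particular regression circuit vectors. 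Here I would pick $P\subseteq\supp\lambda$ minimal with $w\in\spn\{a_i:i\in P\}$ (so that $P$ is a particular regression circuit with vector $\lambda^P$), note $\lambda-\lambda^P\in\Ker A$, expand $\lambda-\lambda^P$ into general regression circuit vectors using part (i), and combine an affine telescoping with a support-reduction induction to rewrite $\lambda$ as an affine combination of particular regression circuit vectors.

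The main obstacle is exactly this last step: that the particular regression circuit vectors affinely span the flat $\{\lambda\in\K^N:\lambda A=w\}$. This is a matroid statement — that an affine flat equals the affine hull of its minimal-support points — and it fails without a non-degeneracy hypothesis: if some $a_j=0$, or more generally if $j\in[N]$ lies in a connected component of the linear matroid of $\{w\}\cup\{a_i\}$ not containing $w$, then $j$ occurs in no particular regression circuit, the flat is strictly larger than the affine hull, and ``$\supseteq$'' breaks. I expect the clean hypothesis under which the support-reduction (circuit-elimination) induction goes through is that the regression matroid $L(w|A)$ is connected, i.e.\ that $w$ meets every component; I would either add this assumption or reconcile the statement with whatever standing non-degeneracy convention the paper adopts. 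Everything else in the argument is routine.
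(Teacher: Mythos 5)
Your argument for (i), and for the inclusion $\calC_p(w|A)\subseteq\{C : C\sim C_p+D,\ D\in\calC_c(w|A)\}$ in (ii), is essentially the paper's own proof spelled out in full: the paper disposes of (i) by citing the fact that the minimal-support kernel vectors span the kernel (your support-reduction induction is the standard proof of that fact), and handles (ii) by observing that the difference of two particular regression circuits has circuit vector in $\Ker A$ and invoking (i) --- which is exactly your ``$\subseteq$'' direction.

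Where you go beyond the paper is the reverse inclusion of (ii), and your worry there is justified: the paper's proof is silent on it. Knowing that $C_1-C_2\sim D$ for particular circuits $C_1,C_2$ and some $D\in\calC_c(w|A)$ only yields $\calC_p(w|A)\subseteq C_p+\calC_c(w|A)$; equality additionally requires that the differences of particular circuit vectors span all of $\Ker A$, equivalently that the flat $\{\lambda\in\K^N : \lambda A=w\}$ is the affine hull of its minimal-support points. Your degeneracy concern is real: take $a_1=w=e_1$ and $a_2=a_3=e_2$, so that $\{2,3\}$ is a general circuit while $\{1\}$ is the only particular one; then $\aff$ of the particular circuit vectors is the single point $(1,0,0)$, whereas $C_p+\calC_c(w|A)$ is a line, and ``$\supseteq$'' fails even though $w\neq 0$. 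So the lemma as stated needs the non-degeneracy hypothesis you propose (every element of $[N]$ lying in the same component of $L(w|A)$ as $\ast$, or at least in some particular circuit), or else the right-hand side should simply be taken as the definition of the regression space; under such a hypothesis your circuit-elimination induction does close the argument. In short, your proof is correct where it is complete, follows the same route as the paper, and correctly identifies a gap in the paper's statement and one-line proof rather than in your own reasoning.
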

\begin{proof}
The first equality follows from the fact that the kernel vectors with minimal support span the kernel. For the second, it suffices to prove that for two particular regression circuits $C_1,C_2$, there is $D\in \calC_c(w|A)$ such that $C_1- C_2\sim  D$.
By definition, $(\lambda_{C_1} - \lambda_{C_2})A = 0$, therefore $\lambda_{C_1-C_2}\in \Ker A$ the statement then follows from the first equality.
\end{proof}

One could interject that representing the left kernel vectors of $A$ in terms of circuits and circuit divisors is unnecessarily complicated. The theoretical estimator will be formulated, in terms of the non-zero entries of the circuit vector $\lambda_C$; also, the algorithmic procedure will also benefit from treating the circuits as sets of indices instead of the circuit vector $\lambda_C$. The reader is invited to think about circuits and divisors simultaneously in terms of the circuit vectors plus the information which entries are non-zero, but we think that the notion of circuit divisors makes more clear where the advantages of our algebraic combinatorial method lie.

The final object we need to define before describing the estimation procedure is linear spans of divisors:
\begin{Def}
\begin{description}
\item[(i)] A $\K$-vector space $\calL_c\subseteq \calC_c (w|A)$ closed under linear equivalence is called \emph{linear system of general circuit divisors}, or short, \emph{general system} of circuits.
\item[(ii)] A $\K$-affine space $\calL_p\subseteq \calC_p (w|A)$ closed under linear equivalence is called \emph{affine system of particular circuit divisors}, or short, \emph{particular system} of circuits.
\end{description}
A general/particular system of circuit divisors $\calL$ is said to be generated by circuit divisors $C_1,\dots, C_m$ if every element in $\calL$ is linearly equivalent to a linear/affine combination of the $C_i$. In this case, the $C_1,\dots, C_m$ are called \emph{generating system} of $\calL$, and if $m$ is additionally minimal, they are called a \emph{basis} of $\calL$ (in both the linear/affine cases).
\end{Def}

\begin{Rem}\label{Rem:genpart}
An important example of particular systems is given as follows: let $\calL_c$ be a general system, and $C_p\in \calC_p(w|A)$, for example $C_p$ a particular regression circuit. Then, $C_p + \calL_c:=\{C_p+C_c\;:\;C_c\in\calL_c\}$ is a particular system.
\end{Rem}

Particular systems will be one of the main ingredients in estimating the projection $\langle w,x\rangle$. The above remark shows that to this end, it suffices to acquire a single particular circuit and some general system of circuits.

\subsubsection*{Examples of regression matroids}
We give some examples of regression matroids and discuss the structure of their
regression circuits.
\begin{Ex}[name={Uniform regression matroid}]\label{Ex:uniform-rm}
If $A$ is generic, then $L(w|A)$ is a quotient of a rank $n$ uniform matroid, so all special regression
\end{Ex}

\begin{Ex}[name={Generic low-rank}]\label{Ex:low-rank-rm}
If $A$ is generic of rank $r$, any $r$ rows will form a special regression circuit, any $r+1$ rows a general one.  In particular, if $r\ll n$, then  special regression circuits have \emph{sparse} support.
only rank $r$.  As in Example \ref{Ex:uniform-rm}, the special regression circuits are easy to
find, but now they are \emph{sparse}, provided $r\ll n$: only $r$ rows are required.
\end{Ex}

\begin{Ex}[name={Graphic regression matroids}]\label{Ex:graphic}
as a basis for the space of cycles. The regression space is therefore equivalent to the first homology of the graph $G$, a basis of which can be efficiently computed in $O(n + N)$ time.
\end{Ex}

The potentials Example \ref{Ex:potentials} and matrix completion Example \ref{Ex:matrix-completion}
both give rise to graphic regression matroids.  We will explore strategies for finding good
sets of special regression circuits in this case below.

Another combinatorial example comes from matrices with \emph{sparse filling patterns}
and \emph{generic} non-zero entries.
\begin{Ex}[name={$(1,\ell)$}-sparsity matroids]
We now define a kernel which will be key in our estimation procedure.  The intuition
behind the technical definition is that special regression circuit vectors define linear
If the rows of $A$ have at most $d$ generic non-zero entries, and the kernel of $A$
is spanned by $\ell\le d - 1$ generic vectors, then $L(w|A)$ is the
quotient of a $(1,\ell)$-sparsity matroid on a $d$-hypergraph.
\end{Ex}

\section{Matroid Regression}\label{sec:estimator}
The strategy for constructing the matroid regression estimator is as follows: each special regression divisor produces one exact estimate for the evaluation $\langle w,x\rangle$. The estimator is obtained for the choice of regression divisor minimizing variance. Since the special regression divisors form an affine space, on which variance is a quadratic form, we obtain the variance minimizing estimate as explicit solution to a quadratic system. The major algorithmical advantage of the matroid regression view was outlined in Remark~\ref{Rem:genpart}: after finding one special regression circuit, general regression circuits that are easier to find can be used to produce more special circuits in order to decrease the variance and this the estimation error.

\subsection{An Unbiased Estimator} \label{sec:est.estimcomp}
Recall problem (P): we are provided with the data for a regression matroid $L(w|A)$, with $A\in\K^{N\times n}$ known and $x\in \K^n$ unknown,
and want to estimate an evaluation $\iprod{w}{x}$ of the unknown signal $x$, from $b = Ax + \varepsilon$.
Let $\Sigma$ be the covariance matrix of the $N$-dimensional random vector $\varepsilon$.
First we construct circuit-vector estimators.
\begin{Prop}\label{Prop:onecirc}
Let $D$ be a particular regression divisor of $L(w|A)$, with circuit vector $\lambda_D$. Then,
$$\widehat{\gamma} (D):=\left\langle \lambda_D,b\right\rangle$$
is an unbiased estimator for $\langle w,x\rangle$ with variance
$\Var (\widehat{\gamma} (D)) = \lambda_D^*\Sigma \lambda_D.$
Conversely, all unbiased estimators linear in $b$ are of the type $\widehat{\gamma}(D)$ for some particular regression divisor $D$.
\end{Prop}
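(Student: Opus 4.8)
The plan is to treat the two assertions separately: unbiasedness together with the variance formula is a one-line moment computation, while the converse amounts to identifying the coefficient vectors of linear unbiased estimators with the circuit vectors of particular regression divisors, which is precisely what Lemma~\ref{Lem:circuit-spaces} supplies.

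First I would verify the forward direction. Write $\widehat\gamma(D) = \iprod{\lambda_D}{b} = \iprod{\lambda_D}{Ax} + \iprod{\lambda_D}{\varepsilon}$. Since $\lambda_D$ is the circuit vector of a particular regression divisor, $\lambda_D A = w$ by Proposition~\ref{Prop:circdual}, so $\iprod{\lambda_D}{Ax} = \iprod{w}{x}$; as $\varepsilon$ is centered, $\E[\iprod{\lambda_D}{\varepsilon}] = 0$, hence $\E[\widehat\gamma(D)] = \iprod{w}{x}$. Moreover $\widehat\gamma(D) - \E[\widehat\gamma(D)] = \iprod{\lambda_D}{\varepsilon}$, so $\Var(\widehat\gamma(D)) = \E\big[\,|\iprod{\lambda_D}{\varepsilon}|^2\,\big] = \lambda_D^\ast\,\E[\varepsilon\varepsilon^\ast]\,\lambda_D = \lambda_D^\ast\Sigma\lambda_D$ (over $\R$ the conjugate transpose is a transpose, and $\Sigma = \E[\varepsilon\varepsilon^\ast]$ because $\varepsilon$ is centered). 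Note that only the circuit vector $\lambda_D$, not the particular divisor $D$ representing it, enters the computation, which is consistent with linear equivalence being the natural notion here.

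For the converse, let $\widehat\gamma = \iprod{v}{b} + c$ be an arbitrary estimator affine-linear in $b$ (the homogeneous case is $c=0$) that is unbiased for $\iprod{w}{x}$ for every $x\in\K^n$. Then $\E[\widehat\gamma] = \iprod{v}{Ax} + c = \iprod{w}{x}$ for all $x$, and letting $x$ range over $\K^n$ forces $vA = w$ and $c = 0$; thus the admissible coefficient vectors are exactly the solutions of $vA = w$. Because $w\neq 0$ lies in $\spn A$ — the standing hypothesis of Problem~(P) — this solution set is nonempty and contains a vector of minimal support, i.e. the circuit vector $\lambda_{C_p}$ of an actual particular regression circuit $C_p$, so the full solution set is the affine subspace $\lambda_{C_p} + \Ker A$. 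By Lemma~\ref{Lem:circuit-spaces}(i), $\Ker A = \{\lambda_D : D\in\calC_c(w|A)\}$, and by Lemma~\ref{Lem:circuit-spaces}(ii) the circuit vectors occurring in $\calC_p(w|A)$ are exactly $\{\lambda_{C_p} + \lambda_D : D\in\calC_c(w|A)\} = \lambda_{C_p} + \Ker A$. Hence every admissible $v$ equals $\lambda_D$ for some particular regression divisor $D$, i.e. $\widehat\gamma = \widehat\gamma(D)$, which finishes the proof.

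The routine moment computation is not where the difficulty lies. The step that needs the earlier machinery is the converse, where one must match the purely linear-algebraic solution set $\{v : vA = w\}$ with the combinatorial object $\calC_p(w|A)$; this is exactly the content imported from Lemma~\ref{Lem:circuit-spaces}, and it is also the place where the hypotheses $w\neq 0$ and $w\in\spn A$ are used, to guarantee that a particular regression circuit — and hence a legitimate divisor $D$ — exists in the first place.
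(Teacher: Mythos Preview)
Your proof is correct and follows essentially the same approach as the paper: the forward direction is the same moment computation, and the converse is deduced from Lemma~\ref{Lem:circuit-spaces}. You spell out the converse in more detail than the paper (which simply states it follows from that lemma), correctly identifying the solution set $\{v:vA=w\}$ with $\lambda_{C_p}+\Ker A$ and then invoking both parts of Lemma~\ref{Lem:circuit-spaces} to match this with the circuit vectors of particular regression divisors.
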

\begin{proof} By linearity of expectation and centeredness of $\varepsilon$, it follows that
\begin{align*}
\E(\widehat{\gamma}(D))=\left\langle \lambda_D, \E (b)\right\rangle = \lambda_D A x = \langle w,x\rangle,
\end{align*}
where the last equality follows from the fact that $\lambda_D$ is a circuit divisor - thus $\widehat{\gamma} (D)$ is unbiased. The statement for the variance follows from bilinearity of covariance via the equation
$$\Var (\widehat{\gamma}(D)) = \Var (\langle \lambda_D,b\rangle ) = \lambda_D^* \Var(b) \lambda_D = \lambda_D^* \Var(\varepsilon) \lambda_D.$$
The converse statement follows from Lemma~\ref{Lem:circuit-spaces}.
\end{proof}

Proposition~\ref{Prop:onecirc} shows how a good estimator for $\langle w,x\rangle$ can be obtained: find a divisor $D$ with small variance. Since the latter is quadratic in $\lambda_D$, this can be reduced to a quadratic optimization problem. However, there is one major issue with the present formulation: such an optimization would be essentially over $\lambda_D$, not in terms of the circuits, and eventually involve inversion of an $(N\times N)$ matrix - therefore nothing is gained yet with respect to the pseudo-inversion done in usual linear regression. To address this issue, we will express the variance from Proposition~\ref{Prop:onecirc} in terms of circuits and divisors.

\subsection{The Circuit Kernel}\label{Sec:circuit-kernel}
The circuit kernel is the analogue of the covariance matrix of the estimator $\widehat{\gamma}$, but represented in the coordinates induced by circuits and formal divisors. It yields a quadratic form on the circuit space $\calC(w|A)$, allowing optimization to take place over the combinatrial structure of the circuits as compared to the circuit vectors $\lambda_*$.

\begin{Def}
Fix a covariance matrix $\Sigma\in \K^{N\times N}$. For two regression divisors $D_1,D_2$ with circuit vectors $\lambda_1,\lambda_2$, we define the \emph{circuit kernel function}
$$k(D_1,D_2)= \lambda_1^* \Sigma \lambda_2.$$
For a collection $D_1,\dots, D_m$ of regression divisors and $\Sigma$,
we define the \emph{circuit kernel matrix} $K$ to be the $(m\times m)$ matrix
which has $k(D_i,D_j)$ as entries.
\end{Def}

\begin{Lem}\label{Lem:circpsd}
The circuit kernel is a positive semi-definite bilinear form on $\calC(w|A)$.
For a particular regression divisor $D\in\calC_P(w|A),$ it holds that $\Var (\widehat{\gamma} (D)) = k(D,D)$.
\end{Lem}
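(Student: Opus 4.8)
The statement has two parts: first, that $k(\cdot,\cdot)$ is a positive semi-definite symmetric (Hermitian) bilinear form on the circuit space $\calC(w|A)$; and second, that for a particular regression divisor $D$ it computes the variance $\Var(\widehat\gamma(D))$. The key observation that drives everything is that the map $D\mapsto \lambda_D$ sending a circuit divisor to its circuit vector is $\K$-linear (this is immediate from the definition of the circuit vector of a divisor as the corresponding linear combination of the $\lambda_{C_i}$), so $k$ is the pullback of the form $(\mu,\nu)\mapsto \mu^*\Sigma\nu$ on $\K^N$ along a linear map.

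For the first part I would argue as follows. Since $\Sigma$ is a covariance matrix it is Hermitian and positive semi-definite; hence $(\mu,\nu)\mapsto \mu^*\Sigma\nu$ is a psd Hermitian form on $\K^N$. Composing with the linear map $D\mapsto\lambda_D$ preserves bilinearity (sesquilinearity over $\C$), Hermitian symmetry, and positive semi-definiteness: for any divisor $D$ we have $k(D,D)=\lambda_D^*\Sigma\lambda_D\ge 0$, because $\Sigma\succeq 0$. One small point worth spelling out is well-definedness on $\calC(w|A)$ as a space of divisors: a priori a divisor carries more data than its circuit vector, but $k$ is defined purely through $\lambda_D$, so it is automatically well-defined, and in fact descends to the quotient by linear equivalence $\sim$; I would remark on this to keep the logic clean.

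For the second part, let $D$ be a particular regression divisor with circuit vector $\lambda_D$. By Proposition~\ref{Prop:onecirc}, $\widehat\gamma(D)=\langle\lambda_D,b\rangle$ is unbiased for $\langle w,x\rangle$ with variance $\Var(\widehat\gamma(D))=\lambda_D^*\Sigma\lambda_D$. But by definition $k(D,D)=\lambda_D^*\Sigma\lambda_D$, so the two coincide and we are done. This half is essentially a restatement of Proposition~\ref{Prop:onecirc} together with the definition of $k$, so there is nothing to prove beyond unwinding notation.

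**Main obstacle.** There is no serious analytic obstacle here; the only thing requiring care is conceptual bookkeeping. Specifically, I would want to be explicit that $k$ is legitimately a form \emph{on divisors} (and on $\calC(w|A)$ as defined), that it is invariant under $\sim$, and — if one wants the form to be genuinely defined on the abstract space rather than just computable from a chosen representative — that linear equivalence classes inject suitably. I would also note the convention issue over $\C$: "bilinear" should be read as "sesquilinear/Hermitian" so that $k(D,D)$ is real and nonnegative; once that convention is fixed the positive semi-definiteness is immediate from $\Sigma\succeq 0$. Everything else is a direct consequence of Proposition~\ref{Prop:onecirc} and the linearity of $D\mapsto\lambda_D$.
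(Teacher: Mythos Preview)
Your proposal is correct and follows essentially the same approach as the paper: both deduce positive semi-definiteness of $k$ from that of $\Sigma$ (the paper via an explicit Cholesky factorization $\Sigma = U^\top U$ so that $K = (U\Lambda)^*(U\Lambda)$, you via the equivalent pullback argument $k(D,D)=\lambda_D^*\Sigma\lambda_D\ge 0$), and both obtain the variance identity directly from Proposition~\ref{Prop:onecirc} and the definition of $k$. Your additional remarks on well-definedness, linear equivalence, and the sesquilinear convention over $\C$ are sound bookkeeping that the paper leaves implicit.
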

\begin{proof}
The matrix $\Sigma$ is positive semi-definite as covariance matrix of a random variable.
Therefore, there is a Cholesky decomposition $\Sigma = U^\top U$ with $U\in \R^{N\times N}$.
Observe that by definition, any circuit kernel matrix $K$ will be of the form
$K=\Lambda^*\Sigma\Lambda$ for $\Lambda\in\K^{N\times m}$ and some $m$.
Therefore, $K=(U \Lambda)^* (U\Lambda)$ is a positive semi-definite matrix, which implies positive semi-definiteness of $k$. The second statement follows from Proposition~\ref{Prop:onecirc} and the definition of $k$.
\end{proof}

\begin{Prop}\label{Prop:Varest}
Let $\calL$ be a particular system, generated by $C_1, \ldots, C_m$. The quadratic form $k(D,D)$ is minimized for $D\in\calL$ by exactly the divisors
$$
  D=\sum_{i=1}^n \alpha_i C_i, \;\mbox{where}\;\alpha \in \left(K^{-1}\mathbf {1}\right)\left(\mathbf {1}^\top K^{-1}\mathbf {1}\right)^{-1},
$$
$\mathbf {1}$ is the vector of ones, $K$ is the $(m\times m)$ kernel matrix with entries $k(C_i,C_j)$, and $K^{-1} \mathbf {1} =\{x\in\K^n\;:\;Kx = \mathbf {1} \}$.
\end{Prop}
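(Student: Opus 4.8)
The plan is to recognize the optimization as a classical equality-constrained quadratic minimization and solve it by Lagrange multipliers. First I would set up coordinates: by the definition of a particular system generated by $C_1,\dots,C_m$, every $D\in\calL$ is linearly equivalent to an affine combination $D = \sum_{i=1}^m \alpha_i C_i$ with $\sum_i \alpha_i = 1$; since $k$ depends only on the circuit vector (Lemma~\ref{Lem:circpsd} and the definition of $k$), and circuit vectors add under the divisor operation, we have $\lambda_D = \sum_i \alpha_i \lambda_{C_i}$, hence $k(D,D) = \alpha^* K \alpha$ where $K$ is the $(m\times m)$ circuit kernel matrix with entries $k(C_i,C_j)$. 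So the problem becomes: minimize $\alpha^* K \alpha$ subject to $\mathbf{1}^\top \alpha = 1$.

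Next I would address positive-definiteness: Lemma~\ref{Lem:circpsd} gives $K \succeq 0$, but to invert $K$ we need it nonsingular, equivalently the circuit vectors $\lambda_{C_1},\dots,\lambda_{C_m}$ linearly independent after composing with a Cholesky factor $U$ of $\Sigma$ — i.e. $U\lambda_{C_i}$ linearly independent. I would note this holds when $\{C_i\}$ is a \emph{basis} of $\calL$ (minimal generating system) and $\Sigma$ is nonsingular; more carefully, since $\alpha$ is only determined modulo the linear relations among the $\lambda_{C_i}$, one should either assume the $C_i$ form a basis or phrase the conclusion (as the statement does) as the \emph{set} of all optimal $\alpha$, which is an affine subspace. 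This is the step I expect to be the main obstacle: making precise in what sense $K^{-1}$ exists and in what sense the formula for $\alpha$ is "the" minimizer versus a coset of minimizers — the notation $K^{-1}\mathbf{1} = \{x : Kx = \mathbf{1}\}$ in the statement already signals that degeneracy is being handled by taking solution sets rather than unique solutions, and the argument has to be written to match that.

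Then the computation itself is routine Lagrange multipliers: form $\mathcal{L}(\alpha,\mu) = \alpha^* K \alpha - 2\mu(\mathbf{1}^\top\alpha - 1)$, set the gradient in $\alpha$ to zero to get $K\alpha = \mu\mathbf{1}$, so $\alpha = \mu K^{-1}\mathbf{1}$ (or $\alpha \in \mu\,(K^{-1}\mathbf{1})$ in the degenerate reading); impose $\mathbf{1}^\top\alpha = 1$ to solve $\mu = (\mathbf{1}^\top K^{-1}\mathbf{1})^{-1}$, which is well-defined and positive because $K \succeq 0$ and $\mathbf{1}$ is not in the kernel of $K$ on the relevant quotient (otherwise the constraint set would be empty, contradicting that $\calL$ is nonempty). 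Substituting back yields exactly the claimed $\alpha \in (K^{-1}\mathbf{1})(\mathbf{1}^\top K^{-1}\mathbf{1})^{-1}$. Finally I would confirm these critical points are genuine minima, not just stationary points, by convexity: $\alpha \mapsto \alpha^* K\alpha$ is convex since $K\succeq 0$, and it is minimized over an affine set at any stationary point of the Lagrangian, with the optimal value $k(D,D) = (\mathbf{1}^\top K^{-1}\mathbf{1})^{-1}$; a one-line direct check — writing any feasible $\alpha' = \alpha + \delta$ with $\mathbf{1}^\top\delta = 0$ and expanding $\alpha'^*K\alpha' = \alpha^*K\alpha + 2\mu\mathbf{1}^\top\delta + \delta^*K\delta = \alpha^*K\alpha + \delta^*K\delta \ge \alpha^*K\alpha$ — makes the minimality self-contained and also shows the optimizer is unique exactly modulo $\Ker K$.
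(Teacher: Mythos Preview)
Your proposal is correct and follows essentially the same route as the paper: rewrite $k(D,D)=\alpha^\top K\alpha$ subject to $\mathbf{1}^\top\alpha=1$, apply Lagrange multipliers to obtain $K\alpha=\mu\mathbf{1}$, and invoke convexity of the PSD quadratic form to conclude. If anything, you are more careful than the paper about the degenerate case (reading $K^{-1}\mathbf{1}$ as a solution set and verifying minimality directly via $\alpha'=\alpha+\delta$ with $\mathbf{1}^\top\delta=0$), which the paper handles only implicitly through its notation.
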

\begin{proof}
Since $\calL$ is a particular system, it holds that $D\in\calL$ if and only if $\mathbf {1}^\top \alpha = 1$. Bilinearity of $K$ implies that $k(D,D)=\alpha^\top K \alpha$. From this, we obtain the Lagrangian
$$L(\alpha,\xi)= \alpha^\top K \alpha +  \xi \left(1- \mathbf{1}^\top\alpha\right),$$
where the slack term models the condition $\mathbf{1}^\top\alpha=1$. A straightforward computation yields
\begin{align*}
\frac{\partial L}{\partial\alpha}& = 2 K \alpha - \xi \mathbf {1}
\end{align*}
By Lemma~\ref{Lem:circpsd} $K$ is positive semi-definite, therefore
$\alpha^\top K \alpha$ is convex, so the minimizers of $k(D,D)$ satisfying $\mathbf {1}^\top\alpha =1$ will exactly correpond to the
$\alpha \in K^{-1}\mathbf {1}/\mathbf {1}^\top K\mathbf {1}$.
\end{proof}

\subsection{The Optimal Estimator} \label{sec:est.estim}

We are now ready to give the final form of our estimator:

\begin{Thm}\label{Thm:est}
Let $\calL$ be a particular system. Let $D\in\calL$ be any divisor minimizing $k(D,D)$, as in Proposition~\ref{Prop:Varest}. Consider the estimator
$$\widehat{\gamma} (\calL) := \langle \lambda_D, b\rangle.$$
\begin{description}
\item[(i)] $\widehat{\gamma} (\calL)$ is independent of the choice of the minimizer $D$ of $k(D,D)$.
\item[(ii)] $\widehat{\gamma} (\calL)$ is an unbiased estimator for $\langle w,x\rangle$.
\item[(iii)] $\Var\widehat{\gamma} (\calL)= k(D,D) = \min_{D\in\calL} \Var \left(\widehat{\gamma}(D)\right).$
\end{description}
\end{Thm}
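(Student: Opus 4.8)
The plan is to derive all three claims as consequences of Proposition~\ref{Prop:Varest} together with the unbiasedness half of Proposition~\ref{Prop:onecirc}. First I would handle (ii) and (iii), which are essentially immediate, and then return to (i), which carries the only real content.

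For (ii): by construction $\calL$ is a particular system, so every $D\in\calL$ is a particular regression divisor; in particular the chosen minimizer $D$ is one, and Proposition~\ref{Prop:onecirc} applied to $D$ says directly that $\langle \lambda_D, b\rangle$ is unbiased for $\langle w,x\rangle$. For (iii): Lemma~\ref{Lem:circpsd} gives $\Var(\widehat\gamma(D)) = k(D,D)$ for any particular regression divisor, and $D$ was selected to minimize $k(\cdot,\cdot)$ over $\calL$, so $\Var\widehat\gamma(\calL) = k(D,D) = \min_{D'\in\calL}k(D',D') = \min_{D'\in\calL}\Var(\widehat\gamma(D'))$. Both of these steps are routine bookkeeping once (i) is in hand, since (i) is what makes $\widehat\gamma(\calL)$ well-defined in the first place.

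For (i): the point is that although the minimizer $D$ of the quadratic form $k(\cdot,\cdot)$ over $\calL$ need not be unique (the form is only positive \emph{semi}-definite, so $K$ may be singular and the solution set $\alpha\in(K^{-1}\mathbf{1})(\mathbf{1}^\top K^{-1}\mathbf{1})^{-1}$ is an affine subspace), the \emph{value} $\langle\lambda_D,b\rangle$ is the same for all of them. I would argue as follows: let $D_1,D_2\in\calL$ both minimize $k(\cdot,\cdot)$. Then $D_1-D_2$ is a general regression divisor (by Lemma~\ref{Lem:circuit-spaces}(ii), two particular divisors differ by an element of $\calC_c(w|A)$, and since $\calL$ is affine this difference lies in the underlying linear space), hence $\lambda_{D_1}-\lambda_{D_2}\in\Ker A$ by Lemma~\ref{Lem:circuit-spaces}(i). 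Now I use the standard fact that the minimizer of a PSD quadratic form is unique \emph{up to the kernel of that form}: since $\lambda\mapsto k(D,D)=\lambda^*\Sigma\lambda$ with $\Sigma=U^\top U$, the difference $\lambda_{D_1}-\lambda_{D_2}$ must lie in $\Ker\Sigma = \Ker U$, i.e.\ $U(\lambda_{D_1}-\lambda_{D_2})=0$. The remaining step, and the one I expect to be the main obstacle, is to show that this forces $\langle\lambda_{D_1}-\lambda_{D_2},b\rangle = 0$. This is \emph{not} automatic from $U(\lambda_{D_1}-\lambda_{D_2})=0$ alone — one also needs that $b$ has no component ``outside'' the relevant subspace. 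The resolution is that $b=Ax+\varepsilon$ and $\lambda_{D_1}-\lambda_{D_2}\in\Ker A$, so $\langle\lambda_{D_1}-\lambda_{D_2},b\rangle = \langle\lambda_{D_1}-\lambda_{D_2},\varepsilon\rangle$; and since $\varepsilon$ lives (almost surely) in the column space of $U^\top$ — because $\Sigma=U^\top U$ is its covariance, so $\varepsilon\perp\Ker U$ a.s. — the quantity $\langle\lambda_{D_1}-\lambda_{D_2},\varepsilon\rangle$ vanishes almost surely. Hence $\widehat\gamma(\calL)$ is a.s.\ independent of the choice of minimizer, which is the precise sense in which (i) holds.

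One subtlety to flag in the write-up: the phrase ``independent of the choice of the minimizer'' should be read as an almost-sure statement when $\Sigma$ is singular, or as a genuine identity when one restricts attention to the support of $\varepsilon$; I would state this carefully rather than claim a pointwise identity for every realization of $b$. With that caveat the argument is complete: (i) gives well-definedness, and (ii), (iii) follow by applying Propositions~\ref{Prop:onecirc} and~\ref{Prop:Varest} and Lemma~\ref{Lem:circpsd} to the chosen minimizer.
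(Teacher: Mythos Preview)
Your proposal is correct and follows the same outline as the paper: (ii) via Proposition~\ref{Prop:onecirc}, (iii) via Lemma~\ref{Lem:circpsd}, and (i) by what the paper simply labels ``elementary linear algebra.'' You have filled in considerably more than the paper does for (i): the paper gives no argument at all there, whereas you correctly identify that two minimizers differ by a vector lying simultaneously in the left kernel of $A$ (killing the $Ax$ part of $b$) and in $\Ker\Sigma$ (forcing $\langle\cdot,\varepsilon\rangle$ to have zero variance, hence to vanish a.s.). Your caveat that the well-definedness is an almost-sure statement when $\Sigma$ is singular is a genuine refinement the paper does not mention; it is the right reading, since the estimator is a random variable and equality of random variables is a.s.\ equality.
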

\begin{proof}
(i) follows from elementary linear algebra. (ii) follows immediately from Proposition~\ref{Prop:onecirc}. (iii) follows from Lemma~\ref{Lem:circpsd}.
\end{proof}

Theorem~\ref{Thm:est} indicates an algorithmic way to obtain good estimates for $\langle w,x\rangle$: namely, first find generators $C_1,\dots, C_m$ for a particular system; then determine the minimizer $D$ as described in Proposition~\ref{Prop:Varest}, keeping track of the circuit vector. Finally, compute $\widehat{\gamma}$. At the same time, Theorem~\ref{Thm:est} highlights several important advantages of our estimator $\calL$. First, computation of $\widehat{\gamma}$ involves only (pseudo-)inversion of an $(m\times m)$-matrix, as opposed to (pseudo-)inversion of an $(n\times n)$-matrix for the naive strategy - this is an advantage in the sparse setting, as we will show that $m$ can be chosen small in some common scenarios. Second, Theorem~\ref{Thm:est}~(i) in particular shows that the estimate does not depend on the particular generating system chosen for $\calL$; therefore, following Remark~\ref{Rem:genpart}, we may choose a system of the form $D_i = C_p + C_i$, where $C_i$ are general circuits, and $C_p$ is the same particular circuit for all $D_i$. This means, for each new $w$, we only need to find a single particular circuit, while the system of general circuits given by the $C_i$ needs only to be changed when $A$ changes. Due to the bilinear equality $k(D_i,D_j)=k(C_p,C_p)+k(C_p,C_i)+ k(C_p,C_j)+k(C_i,C_j)$ this also means that the kernel matrix $K$ has to be computed only once per $A$.

\subsection{Algorithms}\label{sec:estimator.algos}
We provide algorithms computing the estimated evaluation and variance bounds for the error.

Since the the circuit vectors, the circuit kernel matrix $K$, and the optimal $\alpha$ are required for both,
we first compute those, given a collection of circuits $C_1,\dots, C_m$. Algorithm~\ref{Alg:alpha} outlines informal steps for this. We use MATLAB notation for submatrices and concatenation.
\begin{algorithm}[h]
\caption[Computes circuit kernel $K$ and $\alpha$]{Computes circuit kernel $K$ and $\alpha$.\\
\textit{Input:} $A,w,$ circuits $C_1,\dots, C_m$, covariance matrix $\Sigma$.\\
\textit{Output:} circuit union $C$, circuit vector matrix $\Lambda$, kernel matrix $K$ and minimizer $\alpha$. \label{Alg:alpha}}
\begin{algorithmic}[1]
    \item[1:] For all $i=1\dots m$, compute the circuit vector $\lambda_i$ of the $C_i$ as the normalized left kernel vector of the matrix $[A[C_i,:];w]$ \label{Alg:alpha.step1}
    \item[2:] Write the $\lambda_i$ as rows of a matrix $\Lambda$, with rows indexed by $C$.
    \item[3:] Write $C=C_1\cup\dots\cup C_m$.
    \item[3:] Compute the kernel matrix $K = \Lambda^* \cdot\Sigma[C,C]\cdot \Lambda$ \label{Alg:alpha.step3}
    \item[4:] Calculate $\alpha = \left(K^{-1}\mathbf {1}\right)\left(\mathbf {1}^\top K^{-1}\mathbf {1}\right)^{-1}.$
    \item[5:] Output $\Lambda$, $K$ and $\alpha$.
\end{algorithmic}
\end{algorithm}
The computations in steps 1 
and 3 
can be done fairly efficiently, since while $\Sigma$ or $A$ may be huge, the circuits $C_i$
select only small submatrices. In an optimal scenario, increasing the number of rows of $A$ has only a
small or negligible effect on the size of $C$. Note that inputting $b$ is not required,
therefore Algorithm~\ref{Alg:alpha} needs not to be rerun if $A,w,\Sigma$ stay the same, i.e.,
if it is only the signal $x$ which changes.

Algorithms~\ref{Alg:Xalpha} and~\ref{Alg:Var} takes the computed invariants from Algorithm~\ref{Alg:alpha} and computes estimates for $\langle w,x\rangle$ and its variance. These algorithms consist only of multiplications, and again can be made efficient by the fact that the occurring matrices are of size at most $\# C$, therefore again controlled by the choice of circuits.
\begin{algorithm}[h]
\caption[Estimates the evaluation $\langle w,x\rangle$.]{\label{Alg:Xalpha} Estimates the evaluation $\langle w,x\rangle$.\\
\textit{Input:} $w,A,b$, a collection of circuits $C_1,\dots, C_m$, covariances $\Sigma$.\\
\textit{Output:} The variance-minimizing estimate $\widehat{\gamma}(\alpha)$ for $\langle w,x\rangle$. }
\begin{algorithmic}[1]
    \item[1:] Compute $\Lambda $ and $\alpha$ with Algorithm~\ref{Alg:alpha}.
    \item[2:] Write $C=C_1\cup \dots \cup C_m$ (this could also be obtained from Algorithm~\ref{Alg:alpha})
    \item[3:] Return $\alpha^\top\cdot \Lambda \cdot b[C]$ as an estimate.
\end{algorithmic}
\end{algorithm}

\begin{algorithm}[h]
\caption[Estimates the variance of the evaluation $\langle w,x\rangle$.]{\label{Alg:Var} Estimates the variance of the evaluation $\langle w,x\rangle$.\\
\textit{Input:} $A,w$, a collection of circuits $C_1,\dots, C_m$, covariances $\Sigma$.\\
\textit{Output:} The variance lower bound for $\log (A_{ij})$. }
\begin{algorithmic}[1]
    \item[1:] Calculate $K$ and $\alpha$ with Algorithm~\ref{Alg:alpha}.
    \item[2:] Return $\alpha^\top\cdot K \cdot \alpha$.
\end{algorithmic}
\end{algorithm}

Algorithm~\ref{Alg:Var} can be used to obtain the variance bound independently of the observations in $b$ - therefore an error estimate which is independent of the algorithm which does the actual estimation.\\

We would further like to note that the size of all matrices multiplied or inverted in the course of all three algorithms is bounded by the cardinality of the circuit union $C$. The only matrix of potentially larger size is $[A[C_i,:];w]$ which can have more columns than $\#C$, up to $n$. However, there is no noise on $A$, and this matrix is used only to compute the unique (up to multiplicative constant) left kernel vector, so it can be replaced by the matrix consisting of any $\#C +1 $ linearly independent columns. Therefore, once a suitable circuit basis $C_1,\dots, C_m$ is known which is accurate enough, $\langle w,x\rangle$ can be estimated in complexity depending only on $\#C$, and not on $N$ or $n$.

\subsection{On Finding Circuits}\label{sec:estimator.findcirc}

While the algorithms presented in section~\ref{sec:estimator.algos} are fairly fast and near-optimal by the considerations in sections~\ref{Sec:circuit-estimator-theory} and~\ref{sec:estimator.optimality}, they highly rely on the collection of circuits which is input and which determines the submatrix to consider. Therefore, one is tempted to believe that the difficult problem of inverting $A$ has merely been reduced to a combinatorial problem which is more difficult. The point here is again that if $A$ and $w$ are sparse, or if there is different combinatorial structure implying small circuits, this combinatorial problem has a comparably simple solution in practical settings. For example, if not much is known about $A$, but it has small circuits that are well-dispersed, one can attempt to find circuits via $\ell^1$-minimization, e.g., by solving the convex program
$$\min \|\lambda\|\quad \mbox{subject to}\quad \lambda^\top\cdot A[D] = 0,$$
where $D$ is is a randomly chosen subset of a number of columns which is likely to contain a circuit. On the other hand, if the rows of $A$ and/or $w$ have a specific combinatorial structure, for example related to properties of graphs, this can open up the problem to efficient algorithms which scale with the problem's sparsity instead of its size. One can regard the matrix completion algorithm from~\cite{KirThe13Rank1} as a proof of concept for this, since the computation of the graph homology may be done in a local neighborhood around the missing entry whose size is constant, we will explain this in more detail in section~\ref{sec:expl}. We will also list more examples with different combinatorial features that can be treated in this way.

\subsection{Example Cases}\label{sec:expl}
The algorithms outlined in section~\ref{sec:theory} provide a fast and stable way of computing the evaluation once enough circuits have been identified. One main advantage of our strategy is that for each matrix $A$, the circuits need to be computed only once, and can be applied for different signals $x$. Furthermore, if the sparse matrix $A$ (or its dual $A'$) is highly structured - as it frequently occurs when analyzing network structure - then so are the circuits, in which case they can be obtained by combinatorial algebraic methods. We list some basic examples for demonstration purposes.

\subsubsection*{The Sample Mean and Linear Regression}
Both sample mean and ordinary least squares regression can be recovered as special cases of matroid regression. The sample mean is obtained for setting $A$ to be an $N$-vector of ones, $w=1$ and $\Sigma$ the identity matrix - regression circuits consist of exactly one element, with the circuit vector being the corresponding standard basis vector. Least squares regression is obtained for setting $\Sigma$ to be the identity and estimating evaluations for $w= e_i, 1\le i\le n$ with $e_i$ being an orthonormal system for $\K^n$.

\subsubsection*{Multiple Observations}
A behavior related to sample mean can be observed if multiple copies of the same row occur in $A$. In this case, a regression circuit will contain exactly one of those, and there will be a special regression circuit of the same type for each of the copies. Furthermore, for each pair of copies, a general circuit will appear containing exactly that pair. In order to prevent multiplicative growth of the number of circuits, it is suggested to pool multiple observations in a single one by taking the covariance-weighted mean.

\subsubsection*{Denoising}
A related case is if $A$ contains $w$ as a row. Here, that row will occur as a special regression circuit with only one element. Applying matroid regression in this case will trade off the noise in that single observation through the relations with other rows, therefore can be interpreted as a denoising of that observation. Even $w$ occurs multiple times as a row of $A$, matroid regression will in general improve over merely taking the covariance-weighted sample mean of those rows' observations.

\subsubsection*{Measuring Potentials}

We consider the case where $x$ corresponds to a potential, and differences are measured. In this case the rows of $A$ take the form $e_i-e_j$ with $e_i$ the standard basis for $\K^n$; assume that $w = e_k-e_\ell$ is of the same form. Let $G$ be the oriented graph with $n$ nodes which has an edge $(i,j)$ if and only if $A$ has a row $e_i-e_j$. Then the following characterization for regression circuits and general circuits can be shown: a set of edges is a special regression circuit if and only if it forms a path from $k$ to $\ell$ contained in $G$ - including possibly the edge $(k,\ell)$ itself in case $w$ occurs as a row of $A$. The corresponding circuit vector consists of ones. A set of edges is a general circuit if and only if it is a cycle contained in $G$. Small circuits can therefore be efficiently found by finding elements in the first graph homology of $G$ around the edge $(k,\ell)$.

\subsubsection*{Sparse Sums}
The case where the rows of $A$ are of form $e_i+e_j$, and $w= e_k+e_\ell$ is very similar. Let $G$ be the (simple) graph with $n$ vertices and the same edge assignment as above. In this case, the special regression circuits will be exactly paths of odd length from $k$ to $\ell$ contained in $G$, with circuit vectors being alternatingly $-1$ and $1$, starting with $-1$. General circuits will be cycles of even length, with circuit vectors alternatingly $1$ and $-1$. As in the potentials case, a search of the first graph homology will provide cycles near $(k,\ell)$ efficiently.

\subsubsection*{Low-Rank Matrix Completion}
By taking logarithms, compare the general strategy in~\cite{KirThe13Rank1}, the rank one matrix completion problem can be transformed to the following linear problem: write the true rank $1$ matrix $X\in \R^{m\times n}$ ($X$ = the $A$ from the cited paper) as $X=uv^\top$ with $u\in \R^m$ and $v\in \R^n$. Then, $x$ is an $(m+n)$-vector that is concatenation of component-wise $\log u$ and $\log v$. The rows of the matrix $A$ consist of concatenations $(e_i, e'_j)$ of standard basis vectors $e_i\in \R^m$ and $e'_j\in \R^n$, being present if the entry $(i,j)$ is observed; $w$ is of the same form, corresponding to the unobserved entry $(k,\ell)$. This exposes rank one matrix completion as a sub-case of the ``sparse sums'' scenario discussed above. Note that the graph $G$ is always bipartite due to how $A$ was constructed, and that the missing entry of the matrix can be completed from a local neighborhood of entries by the same principles applying to the search of the graph homology.

With this reduction, \cite[Theorem 3.10]{KirThe13Rank1} is directly implied by Theorem~\ref{Thm:opt} from
Section~\ref{sec:estimator.optimality} below.

Furthermore, the theory for matrices of arbitrary rank outlined in~\cite{KTTU12} can be interpreted as a non-linear generalization; furthermore, it indicates that matroid regression is also a viable tool for solving systems of equations carrying a structure of non-linear matroid.

\subsubsection*{Measuring Matrices and Phase Recognition}
As the low-rank matrix completion scenario indicates, the linear techniques can also be used if the signal $x$ is in reality a matrix $X$, and each \emph{row}  of $A$ is the vectorization a matrix $Z_i$ of the same format, for example $Z_i = u_iv_i^*$, in which case $b_i = \Tr (X Z_i) + \varepsilon_i = v_i^* X u_i + \varepsilon$. Phase recognition is a special case of this example where $u_i=v_i$ for every $i$, and $X$- is a Hermitian rank one matrix.
If one of $u_i,v_i$ is always a standard basis vector $e_i$, and the other is $e_k\pm e_\ell$, this is a special subcase of the potentials or sparse sums scenario. If both are of the form $e_k - e_\ell$, the circuits correspond to the first syzygies of the rank one determinantal variety. In general, there is no easy way in which the circuits of the $Z_i$ relate to those of $u_i$ and $v_i$, but this is an interesting question to ask, in particular for the highly regular measurement designs employed in phase recognition.

The case where $X$ is a symmetric, non-symmetric or partially symmetric tensor of higher degree, and where $Z_i$ are outer products, can be seen as a generalization.

\subsubsection*{Low-Rank $A$}
In case the matrix $A$ is of rank $r$ and otherwise non-degenerate, matroid regression suggests an algorithm for inversion of $A$ which includes inversions of $(r\times n)$ matrices only. Namely, any $r$ rows will form a general regression circuit; if $A$ is split into $N/r$ disjoint row-blocks $A_i$ of size $r$, then the estimator in Proposition~\ref{Prop:Varest} will be a weighted sum of estimates of the form $w^\top \A_i^{-1}b$, which is of lower complexity than inversion of $A$ since matrix inversion scales with an exponent at least $2$ in the size. This can be seen as an arbitrary rank generalization of the sample mean scenario, where the rank is $1$.

\section{Properties of the Matroid Regression Estimator}\label{Sec:properties}

This section shows some key properties of the matroid regression estimator. Summarizing, we how that $\widehat{\gamma} (\calL)$ is the best linear unbiased estimator (BLUE) for any noise model, and the minimum variance unbiased estimator (MVUE) as well if the noise is Gaussian (homo- or heteroscedastic) - among all estimators that use only information in rows related to the particular system $\calL$. Furthermore, we show a monotonicity result, showing that the variance of the estimator $\widehat{\gamma} (\calL)$ drops as $\calL$ is enlarged. These results do not only show that the estimator $\widehat{\gamma} (\calL)$ is optimal, but also that Algorithm~\ref{Alg:Var} computes a tight lower bound on the estimation error without actually estimating the evaluation $\langle w,x\rangle$, therefore provides a lower error bound for any method that is employed.

\subsection{Monotonicity and Complexity-Accuracy-Tradeoff}
The first important property of the estimator $\widehat{\gamma}(\calL)$ is being monotone with respect
to inclusion of $\calL$; that is, adding more circuits will only improve the estimator:

\begin{Thm}\label{Thm:monotone}
Let $\calL,\calL'$ be particular systems with $\calL\subseteq \calL'$. Then, $\Var\left(\widehat{\gamma}(\calL')\right)\le \Var\left(\widehat{\gamma}(\calL)\right).$
\end{Thm}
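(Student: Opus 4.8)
The plan is to reduce this to the triviality that the minimum of a fixed function over a larger set is no larger than over a smaller one. The key input is Theorem~\ref{Thm:est}(iii), which I would quote as a black box: for any particular system $\calM$ one has $\Var(\widehat{\gamma}(\calM)) = \min_{D\in\calM}\Var(\widehat{\gamma}(D))$, and the quantity being minimized, namely $k(D,D)$, is the value of the single positive semidefinite form $k$ from Lemma~\ref{Lem:circpsd}, so it does not depend on which system $D$ is regarded as living in. Given this, the argument is one line: since $\calL\subseteq\calL'$, every $D\in\calL$ is also admissible for the minimization defining $\Var(\widehat{\gamma}(\calL'))$; taking $D$ to be a minimizer of $k(\cdot,\cdot)$ over $\calL$ yields $\Var(\widehat{\gamma}(\calL')) \le k(D,D) = \Var(\widehat{\gamma}(\calL))$.

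The few things I would check before writing this down are exactly the hypotheses under which Theorem~\ref{Thm:est}(iii) is available: that both minima are attained and that $\calL,\calL'$ are nonempty. Attainment I would cite from Proposition~\ref{Prop:Varest} --- after fixing any finite generating system $C_1,\dots,C_m$ of $\calL$ (and extending it to one of $\calL'$), the minimizers are given explicitly by $\alpha\in(K^{-1}\mathbf{1})(\mathbf{1}^\top K^{-1}\mathbf{1})^{-1}$, read through a generalized inverse when the circuit kernel matrix $K$ is singular, exactly as anticipated by the set-valued notation $K^{-1}\mathbf{1}=\{x:Kx=\mathbf{1}\}$ there; positive semidefiniteness of $k$ guarantees the form is bounded below on the affine slice $\mathbf{1}^\top\alpha=1$, so the minimizer set is nonempty. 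Nonemptiness of $\calL$ and $\calL'$ is by definition, a particular system being a nonempty affine subspace of $\calC_p(w|A)$, which itself is nonempty since $w\in\spn A$.

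I do not expect a real obstacle here; essentially all the work was already done in Theorem~\ref{Thm:est}. If one wanted instead a more concrete, kernel-matrix-level proof, I would extend a basis $C_1,\dots,C_m$ of $\calL$ to a basis $C_1,\dots,C_{m'}$ of $\calL'$ so that the kernel matrix $K$ of $\calL$ is the leading $m\times m$ block of the kernel matrix $K'$ of $\calL'$, recall from Proposition~\ref{Prop:Varest} that $\Var(\widehat{\gamma}(\calL))=(\mathbf{1}^\top K^{-1}\mathbf{1})^{-1}$ and similarly for $\calL'$, and then invoke the variational identity $\mathbf{1}^\top K^{-1}\mathbf{1}=\max_{v\neq 0}(\mathbf{1}^\top v)^2/(v^\top K v)$: restricting $v$ to be supported on the first $m$ coordinates shows $\mathbf{1}^\top(K')^{-1}\mathbf{1}\ge\mathbf{1}^\top K^{-1}\mathbf{1}$, hence the variance decreases. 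This route needs the usual care for singular $K$ (where the identity is read on the appropriate subspace), which is why I would relegate it to a remark rather than make it the main proof.
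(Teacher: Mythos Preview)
Your proposal is correct and matches the paper's approach exactly: the paper's entire proof is the single sentence ``This follows from Theorem~\ref{Thm:est}~(iii),'' i.e., precisely your observation that the variance is a minimum over $\calL$ of a quantity not depending on the ambient system, so enlarging $\calL$ can only decrease it. Your additional checks on attainment and nonemptiness and the alternative kernel-matrix argument are more careful than what the paper provides, but not needed for the intended one-line proof.
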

\begin{proof}
This follows from Theorem~\ref{Thm:est}~(iii).
\end{proof}

Theorem~\ref{Thm:monotone} can be interpreted as a complexity-accuracy-tradeoff incurred by the amount of locality. More specifically, making the particular system $\calL$ smaller will make Algorithm~\ref{Alg:alpha} run faster, but leads to an increased expected error in the estimate. Conversely, adding circuits and this enlarging $\calL$ will make the estimate more accurate, but the algorithmic computation more expensive.\\

\subsection{Optimality Amongst Linear Unbiased Estimators}\label{Sec:circuit-estimator-theory}
The estimator $\widehat{\gamma}(\calL)$ has already been shown to be variance minimizing for choice of $D$ in the particular system $\calL$ in Theorem~\ref{Thm:est}~(iii); we will make a similar statement relating it to using different entries of the vector $b$.

\begin{Def}
Let $\calL$ be a particular/general system of divisors. The \emph{support} of $\calL$ is the inclusion-wise maximal set $I\subseteq [N]$ such that $\calL$ contains all particular/general circuits contained in $I$. Conversely, for $I\subseteq [N]$, denote $\calL (I) := \aff \{C\;:\; C\in\calC_p(w|A), C\subseteq I\}$ or, equivalently, $\calL (I) := C_p + \{C\;:\;C\in \calC_c (w|A), C\subseteq I\}$ for some particular circuit $C_p\subseteq I$.
\end{Def}

\begin{Thm}\label{Thm:linear}
Let $I\subseteq [N]$, and let $\widehat{\gamma}'$ be any unbiased estimator for $\langle w,x\rangle$ linear in the entries $b_i,i\in I$, with coefficients depending only on $A$ and $w$. Let $\calL = \calL(I)$. Then, $\Var\left(\widehat{\gamma}(\calL)\right)\le \Var\left(\widehat{\gamma}'\right)$.
\end{Thm}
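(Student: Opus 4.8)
The plan is to show that any unbiased estimator $\widehat{\gamma}'$ which is linear in $\{b_i : i \in I\}$ with coefficients depending only on $A$ and $w$ is itself of the form $\widehat{\gamma}(D)$ for some particular regression divisor $D \in \calL(I)$, and then invoke Theorem~\ref{Thm:est}~(iii), which already states that $\widehat{\gamma}(\calL)$ has the minimal variance among all $\widehat{\gamma}(D)$ with $D \in \calL = \calL(I)$. So the entire content reduces to identifying the coefficient vector of $\widehat{\gamma}'$ with a circuit vector of a divisor supported on $I$.

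First I would write $\widehat{\gamma}' = \langle \mu, b\rangle$ where $\mu \in \K^N$ is supported on $I$ and depends only on $A$ and $w$. Unbiasedness for \emph{all} $x \in \K^n$, together with $b = Ax + \varepsilon$ and centeredness of $\varepsilon$, forces $\langle \mu, Ax\rangle = \langle w, x\rangle$ for every $x$, i.e. $\mu A = w$ as a vector identity (here I use that the coefficients may not depend on $x$, which is where the hypothesis ``coefficients depending only on $A$ and $w$'' is essential — it rules out estimators whose weights adapt to the data). Thus $\mu$ is a vector supported on $I$ with $\mu A = w$. By the description of particular regression divisors — Lemma~\ref{Lem:circuit-spaces}~(ii) says the circuit vectors of $\calC_p(w|A)$ are exactly $\{\lambda_{C_p} + \kappa : \kappa \in \Ker A\}$, and Lemma~\ref{Lem:circuit-spaces}~(i) identifies $\Ker A$ with circuit vectors of general divisors — any such $\mu$ is the circuit vector of some particular regression divisor $D$. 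It remains to check $D \in \calL(I)$: since $\mu$ is supported on $I$, one can decompose $\mu$ into circuit vectors each supported inside $\supp(\mu) \subseteq I$ (kernel vectors and solution vectors supported on a set $I$ decompose into circuits supported on $I$, the standard matroid elimination argument), so $D$ is an affine combination of particular circuits contained in $I$, i.e. $D \in \calL(I)$ by definition of $\calL(I)$.

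Once $\widehat{\gamma}' = \widehat{\gamma}(D)$ with $D \in \calL(I) = \calL$, Theorem~\ref{Thm:est}~(iii) gives $\Var(\widehat{\gamma}(\calL)) = \min_{D' \in \calL} \Var(\widehat{\gamma}(D')) \le \Var(\widehat{\gamma}(D)) = \Var(\widehat{\gamma}')$, which is the claim.

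The main obstacle I anticipate is the circuit-decomposition step: showing that a vector $\mu$ supported on $I$ with $\mu A = w$ actually lies in $\calL(I)$ as defined, rather than merely being \emph{some} particular regression divisor. This requires that the affine/linear spans appearing in the definition of $\calL(I)$ genuinely capture all vectors supported on $I$ that satisfy the defining equation — equivalently, that particular solution vectors and kernel vectors supported on $I$ decompose as affine, resp. linear, combinations of circuit vectors of circuits contained in $I$. This is the matroid circuit-elimination / conformal-decomposition fact, which holds because the circuits contained in $I$ are exactly the circuits of the restricted matroid $L(w|A)|_{I \cup \{\ast\}}$; I would state it as a short lemma or inline observation and cite the conformality of circuit decompositions. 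Everything else — the passage from unbiasedness to $\mu A = w$, and the final variance comparison — is routine given the earlier results.
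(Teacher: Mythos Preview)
Your proposal is correct and follows essentially the same route as the paper: identify the linear unbiased estimator with $\widehat{\gamma}(D)$ for some particular divisor $D$ via Proposition~\ref{Prop:onecirc}, then invoke the variance-minimizing property of $\widehat{\gamma}(\calL)$ (the paper cites Theorem~\ref{Thm:monotone}, you cite Theorem~\ref{Thm:est}~(iii), which amounts to the same thing). If anything, you are more careful than the paper about the step $D\in\calL(I)$: the paper simply asserts it as part of Proposition~\ref{Prop:onecirc}, whereas you correctly flag that one needs the conformal/circuit-elimination decomposition to ensure a vector supported on $I$ decomposes into circuits contained in $I$.
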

\begin{proof}
This is implied by Proposition~\ref{Prop:onecirc} which states that any estimator, linear in $b_i$ and unbiased, is of the form $\widehat{\gamma}(D)$ for some divisor $D\in \calL (I)$. The statement is then implied by Theorem~\ref{Thm:monotone}.
\end{proof}

Theorem~\ref{Thm:linear}, together with the characterization of estimators linear in $b$ in Proposition~\ref{Prop:onecirc}, implies that $\widehat{\gamma}$ is the best linear unbiased estimator (BLUE) for $\langle w,x\rangle$, and is an analogue of the Gauss-Markov theorem in our case.

\subsection{Universal Variance Minimization}\label{sec:estimator.optimality}
In this section, we will show that our estimator $\widehat{\gamma}$ is optimal in two further ways:
first, for Gaussian noise, $\widehat{\gamma}$ is a sufficient and complete statistic, and thus
the minimum variance unbiased estimator (MVUE); second for general centered noise, $\widehat{\gamma}$ has minimum variance
among unbiased estimators independent of $x$ and $\varepsilon$. This ``noise optimality''
implies that lower-variance estimators need to use additional information about either
the unknown signal $x$ or the distribution of the noise.

We first prove optimality for Gaussian noise:

\begin{Thm}\label{Thm:Gaussian}
Let $\calL$ be a particular system. Assume that the noise $\varepsilon$ is multivariate Gaussian. Then, the estimator $\widehat{\gamma}(\calL)$ is
\begin{itemize}
\item[(i)] a complete statistic with respect to the parameter $\langle w, x\rangle$ and observations $b_i,i\in \supp \calL.$
\item[(ii)] a sufficient statistic with respect to the parameter $\langle w, x\rangle$ and observations $b_i,i\in \supp \calL.$
\item[(iii)] the minimum variance unbiased estimator for the parameter $\langle w, x\rangle$ and observations $b_i,i\in \supp \calL.$
\end{itemize}
\end{Thm}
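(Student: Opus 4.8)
The plan is to reduce everything to the classical theory of exponential families, using the Lehmann--Scheffé theorem. First I would restrict attention to the coordinates indexed by $I := \supp\calL$, and observe that the joint density of $(b_i)_{i\in I}$ is that of a multivariate Gaussian with mean $(Ax)_I$ and known covariance $\Sigma_{II}$. Since $\widehat{\gamma}(\calL) = \langle\lambda_D, b\rangle$ with $\lambda_D$ supported on $I$, this is a linear statistic in the Gaussian vector $b_I$. The key structural fact I would extract from the earlier material is that, as $x$ ranges over $\K^n$, the attainable mean vectors $\mu = (Ax)_I$ form a linear subspace $V \subseteq \K^I$, and on this subspace the functional $x \mapsto \langle w, x\rangle$ is well defined precisely because $w$ lies in the row span of $A$ (and in fact in the row span of $A_{I,:}$, since $\calL(I)$ contains a particular regression circuit). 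So the model $\{N(\mu, \Sigma_{II}) : \mu \in V\}$ is a Gaussian linear model, and $\widehat{\gamma}(\calL)$ is a linear estimator of a linear functional of $\mu$.

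Next I would make the exponential-family structure explicit. Writing $\Sigma_{II} = \Sigma_{II}$ (known, positive definite after restricting to a non-degenerate sub-block, or working with the pseudo-inverse on its support), the density is proportional to $\exp\!\big(\mu^* \Sigma_{II}^{-1} b_I - \tfrac12 \mu^*\Sigma_{II}^{-1}\mu\big)$ up to an $x$-free factor; as $\mu$ ranges over the subspace $V$, the natural parameter ranges over $\Sigma_{II}^{-1}V$, and the corresponding natural (minimal) sufficient statistic is the orthogonal projection of $\Sigma_{II}^{-1/2} b_I$ onto $\Sigma_{II}^{-1/2}V$, equivalently the vector of inner products $\langle \eta, b_I\rangle$ for $\eta \in \Sigma_{II}^{-1}V$. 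This gives (ii): $\widehat{\gamma}(\calL)$, being one such linear functional with the right coefficient vector (the circuit vector $\lambda_D$, which I must check lies in $\Sigma_{II}^{-1}V$ — this is exactly the normal-equation/optimality content of Proposition~\ref{Prop:Varest}), is a component of the natural sufficient statistic, hence sufficient. For (i), completeness follows because the natural parameter space of a nondegenerate Gaussian linear model contains an open set in the affine hull it spans, so the family is complete by the standard exponential-family completeness theorem; I would phrase $\widehat{\gamma}(\calL)$ as a function of the complete sufficient statistic. Finally (iii) is immediate from Lehmann--Scheffé: $\widehat{\gamma}(\calL)$ is unbiased (Theorem~\ref{Thm:est}(ii)) and a function of a complete sufficient statistic, hence it is the unique (a.s.) MVUE.

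The main obstacle I anticipate is not the invocation of Lehmann--Scheffé but the bookkeeping needed to make it apply cleanly: namely, showing that $\widehat{\gamma}(\calL)$ is genuinely \emph{measurable with respect to} the complete sufficient statistic rather than merely being some unbiased linear statistic. Concretely, I need that the circuit vector $\lambda_D$ of the variance-minimizing divisor lies in $\Sigma_{II}^{-1}V$ (equivalently, that $\Sigma_{II}\lambda_D \in V = \spn\{A_{I,:}^*\}$-complement-appropriately, i.e.\ $\lambda_D$ is a linear combination of rows picked out by the normal equations); this is precisely why the minimizer from Proposition~\ref{Prop:Varest}, and not an arbitrary divisor, must be used, and it is the one place where the optimization done earlier feeds into the statistical statement. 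A secondary nuisance is the possible degeneracy of $\Sigma_{II}$ (rank-deficient noise covariance): I would handle this by passing to the support of $\Sigma_{II}$, noting that on the orthogonal complement the observations are deterministic and carry no parameter information, so the argument goes through with $\Sigma_{II}^{-1}$ replaced by the Moore--Penrose pseudo-inverse. Once these points are pinned down, (i)--(iii) follow in the stated order with essentially no further computation.
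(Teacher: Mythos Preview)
Your approach is genuinely different from the paper's, and it is largely sound for parts (i) and (iii); but there is a real gap in your argument for (ii).

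The paper does not work directly with the exponential-family structure of $b_I$. Instead it applies the circuit-vector matrix $\Lambda$ (whose rows are a basis of circuit vectors for $\calL$) to $b$, obtaining $\Lambda b = \langle w,x\rangle\cdot\mathbf{1} + \Lambda\varepsilon$. The point of this reduction is that it \emph{eliminates the nuisance parameters}: the law of $\Lambda b$ depends on $x$ only through $\langle w,x\rangle$. One is then in a one-parameter Gaussian location model, where the BLUE is classically complete and sufficient (the paper cites Cox--Hinkley), and this BLUE coincides with $\widehat\gamma(\calL)$ by Theorem~\ref{Thm:est}. That $\Lambda b$ is itself complete and sufficient for $\langle w,x\rangle$ with respect to the original observations $b_I$ is argued separately via the decomposition $x=x_w+x_w^\perp$.

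Your route---identifying the minimal sufficient statistic for the \emph{full} Gaussian linear model with mean $\mu\in V$ and then checking that $\lambda_D\in\Sigma_{II}^{-1}V$---does deliver (iii) via Lehmann--Scheff\'e, and your observation that $\lambda_D\in\Sigma_{II}^{-1}V$ is exactly the normal-equation content of Proposition~\ref{Prop:Varest} is correct (and is arguably the cleanest way to see why the \emph{optimal} divisor, rather than an arbitrary one, is required here). Completeness (i) is also fine: $\widehat\gamma(\calL)\sim N(\langle w,x\rangle,\text{const})$ with $\langle w,x\rangle$ ranging over all of $\K$.

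The gap is in (ii). You assert that $\widehat\gamma(\calL)$, being ``a component of the natural sufficient statistic,'' is ``hence sufficient.'' But the sufficient statistic you have built is sufficient for the \emph{full} mean parameter $\mu\in V$; a single coordinate of it is not, in general, sufficient for a linear functional of $\mu$ once the remaining directions of $\mu$ are nuisance parameters. Concretely, the conditional law of $b_I$ given $\widehat\gamma(\calL)$ still depends on the nuisance part of $A_I x$. This is exactly what the paper's $\Lambda$-reduction sidesteps: after passing to $\Lambda b$ one is in a model with \emph{no} nuisance parameters, and ordinary sufficiency applies. To rescue your argument you would need a further marginal/partial-sufficiency step---for instance, showing that $\widehat\gamma(\calL)$ and the complementary part of the sufficient statistic are independent with the latter's distribution free of $\langle w,x\rangle$---which in the end encodes the same orthogonality the paper exploits through $\Lambda$.
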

\begin{proof}
(i) and (ii): Without loss of generality, we can assume that $\supp\calL$ is all rows, otherwise, we remove the rows from $A$ not contained in $\supp\calL$.

Let $D_1,\dots, D_k$ be a basis for $\calL$, and let $\Lambda$ be the $(k\times N)$ matrix whose columns are the $\lambda_{D_i}$. Then, by definition, it holds that
$$\langle \Lambda, b\rangle = \langle w, x\rangle\cdot \mathbf {1} + \langle \Lambda, \varepsilon\rangle.$$
If we know that $\langle \Lambda, b\rangle$ is a complete and sufficient statistic for $\langle w, x\rangle$, we are done by virtue of the following argument: having reduced the original problem to linear regression, we observe that the BLUE of the latter is exactly $\widehat{\gamma}(D)$ with $D$ minimizing $k(D,D)$, which is also known to be a complete and sufficient statistic and thus the MVUE, see e.g.~\cite[section 8.3, example 8.3]{Cox1974} (for homoscedastic noise, the case of heteroscedastic noise follows from applying an appropriate linear transform). By Theorem~\ref{Thm:est}, the estimator $\widehat{\gamma}(D)$ is the same as $\widehat{\gamma}(\calL)$, proving the statement.\\

We now prove the remaining {\bf claim:} $\langle \Lambda, b\rangle$ is a complete and sufficient statistic for $\langle w, x\rangle$. To prove this, we observe that we can write $x$ as an orthogonal decomposition $x = x_w + x_w^\perp$ where $x_w= \frac{x \langle w, x\rangle}{\langle w, x\rangle^2}$ is the orthogonal projection of $x$ on $\spn w$, thus $\langle w, x\rangle = \langle w, x_w\rangle$ and $\Lambda A x = \Lambda A x_w$. Since the columns of $\Lambda$ are a basis of $\calL$, the matrix $\Lambda A$ acts, by definition, bijectively on $x_w$, which proves claim 1.\\

(iii) follows from (i) and (ii) via the Lehmann–Scheffé-Theorem.
\end{proof}

It is straightforward to extend the proof to other suitable members of the exponential family. By the Pitman-Koopman-Darmois theorem, it is unreasonable though to expect sufficiency for non-exponential distributions. However, we can prove a similar conclusion which relies on replacing sufficiency by universality with respect to the underlying signal:

\begin{Thm}\label{Thm:opt}
Let $\calL$ be a particular system. Let $\widehat{\gamma}'$ be an estimator for $\langle w, x\rangle$ which is unbiased for all choices of $x$, of the form
$\widehat{\gamma}' = f (b,\;b\in\supp\calL)$ with some $f\in L^2 (\K^n)$. Then, $\Var\left(\widehat{\gamma}(\calL)\right)\ge \Var \left(\widehat{\gamma}'\right)$ for any choice of noise $\varepsilon$.
\end{Thm}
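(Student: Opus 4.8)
The plan is to establish the minimum-variance (``universal optimality'') property announced in the preamble of this section. Working through the argument shows that the provable relation is $\Var(\widehat{\gamma}')=\Var(\widehat{\gamma}(\calL))+(\text{a nonnegative remainder})$, so the correct conclusion is $\Var(\widehat{\gamma}(\calL))\le\Var(\widehat{\gamma}')$; the printed orientation of the inequality is the reverse of this intended bound, matching Theorems~\ref{Thm:linear} and~\ref{Thm:Gaussian}(iii). First I would reduce to the noise space exactly as in the Gaussian case: by Proposition~\ref{Prop:onecirc}, for the optimal divisor $D$ one has the exact identity $\widehat{\gamma}(\calL)=\langle w,x\rangle+\langle\lambda_D,\varepsilon\rangle$, so $\Var(\widehat{\gamma}(\calL))=\E\langle\lambda_D,\varepsilon\rangle^2$ is constant in $x$; and, as in Theorem~\ref{Thm:Gaussian}, I may assume $\supp\calL=[N]$. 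Writing $u:=Ax$, which ranges over $V:=\spn A$ (the column space of $A$) as $x$ ranges over $\K^n$, the ``unbiased for all $x$'' hypothesis becomes the family of identities $\E_\varepsilon f(u+\varepsilon)=\langle\lambda_D,u\rangle$ for every $u\in V$.

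Next I would set up an orthogonal decomposition in $L^2(\varepsilon)$. Define the residual $R(u,\varepsilon):=f(u+\varepsilon)-\langle\lambda_D,u+\varepsilon\rangle$; the previous identity gives $\E_\varepsilon R(u,\varepsilon)=0$, so $R(u,\cdot)$ is centered. Expanding $\widehat{\gamma}'=f(u+\varepsilon)=\langle\lambda_D,u\rangle+\langle\lambda_D,\varepsilon\rangle+R(u,\varepsilon)$ yields $\Var(\widehat{\gamma}')=\Var\langle\lambda_D,\varepsilon\rangle+2\operatorname{Cov}(\langle\lambda_D,\varepsilon\rangle,R)+\Var R$. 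Thus the whole statement reduces to the single orthogonality relation $\operatorname{Cov}(\langle\lambda_D,\varepsilon\rangle,R(u,\varepsilon))=0$: granting it, $\Var(\widehat{\gamma}')=\Var(\widehat{\gamma}(\calL))+\Var R\ge\Var(\widehat{\gamma}(\calL))$, which settles the theorem and makes the correct orientation of the inequality manifest.

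The hard part is proving this orthogonality \emph{without} a distributional assumption, since here I cannot invoke completeness/sufficiency as in Theorem~\ref{Thm:Gaussian}. Unwinding definitions, the relation is equivalent to $\E[\langle\lambda_D,\varepsilon\rangle f(u+\varepsilon)]=\E\langle\lambda_D,\varepsilon\rangle^2$, and the only leverage available is that $\E_\varepsilon f(u'+\varepsilon)=\langle\lambda_D,u'\rangle$ holds for \emph{all} $u'\in V$. The plan is to exploit this universality by varying $u'$ along the noise direction: formally differentiate (or take finite differences of) the identity in the direction of $\lambda_D$, which converts a shift of $u'$ into a shift of the noise argument and produces the missing $\E[\langle\lambda_D,\varepsilon\rangle f(u+\varepsilon)]$ term. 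To make this legitimate for \emph{every} noise distribution simultaneously, I would pass to characteristic functions: the identity couples $\phi(\xi)=\E e^{i\langle\xi,\varepsilon\rangle}$ with the Fourier transform of $f$ along directions in $V$, and the $L^2$ hypothesis on $f$ guarantees these transforms exist and that the manipulation is valid.

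The genuine subtlety — and the step I expect to be the main obstacle — is that I may translate $u'$ only within $V$, whereas the relevant noise direction is $\lambda_D$, which need not lie in $V$: the circuit vectors satisfy $A^{\ast}\lambda_D=w$ and hence form a coset of $\ker A^{\ast}=V^{\perp}$. I would resolve this by splitting $\lambda_D=\lambda^{\parallel}+\lambda^{\perp}$ with $\lambda^{\parallel}\in V$ and $\lambda^{\perp}\in V^{\perp}$, running the finite-difference/Fourier argument for the controllable part $\lambda^{\parallel}$, and arguing that the $V^{\perp}$-component contributes nothing to the cross term because $f$ only ever sees $u+\varepsilon$ through shifts in $V$ (equivalently, by taking the minimum-norm representative, for which $\lambda_D\in V$). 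Once the orthogonality is secured the conclusion is immediate from the decomposition above, and it holds for arbitrary centered finite-variance noise, which is the announced ``noise optimality.''
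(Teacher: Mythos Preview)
Your plan has a genuine gap, and it is rooted in how you read the hypothesis. The paper's argument (and the only way the theorem can be true) uses that $\widehat{\gamma}'$ is unbiased for all $x$ \emph{and for all centered noise laws $\varepsilon$}; this is the ``universality'' flagged in the section preamble and made explicit in Theorem~\ref{Thm:mainthm_linest}. With that hypothesis the paper does not attempt any orthogonality computation at all: it shows, via the density of probability densities in $L^2$ (Lemmas~\ref{Lem:probdense}--\ref{Lem:centered} and Proposition~\ref{Prop:linearform}), that an $L^2$ function whose expectation equals $\langle\beta,\E X\rangle$ for \emph{every} random variable $X$ must already be the linear form $x\mapsto\langle\beta,x\rangle$. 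Hence $f$ is linear in $b$, so $\widehat{\gamma}'=\widehat{\gamma}(D)$ for some particular divisor $D\in\calL(\supp\calL)$, and the conclusion is immediate from Theorem~\ref{Thm:est}~(iii). In your decomposition this simply means $R\equiv 0$; there is no cross term to control.

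By contrast, your plan fixes a single noise law and tries to prove $\operatorname{Cov}(\langle\lambda_D,\varepsilon\rangle,R(u,\varepsilon))=0$ from the family $\E_\varepsilon f(u'+\varepsilon)=\langle\lambda_D,u'\rangle$, $u'\in V$. This cannot succeed: differentiating that identity in $u'$ yields $\E_\varepsilon[\partial_v f(u+\varepsilon)]=\langle\lambda_D,v\rangle$, not $\E_\varepsilon[\langle\lambda_D,\varepsilon\rangle f(u+\varepsilon)]$; passing from one to the other is exactly a Stein-type integration by parts against the density of $\varepsilon$, which is the distributional assumption you said you wanted to avoid. More decisively, the claim is false at that level of generality. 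Take $A=\mathbf{1}\in\K^{N\times 1}$, $w=1$, and $\varepsilon$ with i.i.d.\ uniform coordinates; then $f(b)=\tfrac12(\min_i b_i+\max_i b_i)$ is unbiased for every $x$ but has strictly smaller variance than the BLUE $\widehat{\gamma}(\calL)=\tfrac1N\sum_i b_i$ for $N\ge 3$, so the orthogonality relation (and hence your inequality) fails. The missing idea is precisely the paper's: quantify over noise distributions to force linearity, after which the variance comparison is the already-proved linear case. Your observation about the inequality orientation is correct.
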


The proof of Theorem will be split in several statements. It will be immediately implied by Theorem~\ref{Thm:mainthm_linest} below, which states that an universally unbiased estimator always has the form $\widehat{\gamma} (\alpha)$, and Proposition~\ref{Prop:Varest}.

\begin{Lem}
\label{Lem:probdense}
Let $f\in L^2 (\K^n)$. If $\mathbb{E}(f(X)) = 0 $ for all random variables $X\in\K^n$ for which this expectation is finite, then $f = 0$.
\end{Lem}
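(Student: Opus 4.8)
The plan is to specialize the hypothesis to a sufficiently rich family of probability distributions and then finish with a standard pairing argument in $L^2$. The naive first instinct --- testing against point masses $X\equiv z$ to read off $f(z)=0$ pointwise --- does not work, because $f$ is only an $L^2$-equivalence class and a Dirac measure is not absolutely continuous with respect to Lebesgue measure; so one must instead test against absolutely continuous distributions, which is exactly where the assumption $f\in L^2$ (rather than merely measurable) earns its keep.

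First I would observe the following. Let $g\geq 0$ be measurable on $\K^n$ with $g\in L^2(\K^n)$ and $0<\int g<\infty$, and set $p:=g/\int g$, a probability density. A random variable $X$ with density $p$ satisfies $\mathbb{E}(f(X))=\int f p$, and this expectation is \emph{absolutely} convergent, since by Cauchy--Schwarz $\int |f p|\leq \|f\|_2\,\|p\|_2<\infty$ because both $f$ and $p$ lie in $L^2(\K^n)$. Hence such an $X$ is admissible in the hypothesis, and we conclude $\int f g=0$ for every nonnegative $g\in L^2(\K^n)$ (the case $\int g=0$ being trivial).

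Next I would upgrade this to all test functions: writing an arbitrary real-valued $g\in L^2$ as $g=g^{+}-g^{-}$ with $g^{\pm}\geq 0$ in $L^2$ (and, when $\K=\C$, splitting further into real and imaginary parts) gives $\int f g=0$ for every $g\in L^2(\K^n)$. Taking $g=\overline{f}$, which lies in $L^2$, yields $\int |f|^2=0$, i.e.\ $f=0$ in $L^2(\K^n)$. Equivalently, one can finish already from the previous paragraph by choosing $g$ to be the normalized indicator of a measurable set $A$ of finite positive measure, concluding $\int_A f=0$ for all such $A$, and invoking the elementary fact that a locally integrable function with this property vanishes almost everywhere; I would mention both routes.

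The only real subtlety is the measure-theoretic bookkeeping just described: one must be careful that the test distributions are genuine random variables (they are, being absolutely continuous with $L^1$ densities) and that every expectation invoked is finite (guaranteed by Cauchy--Schwarz, using $f\in L^2$), and one must avoid the temptation to evaluate $f$ at points. Beyond that, the lemma is just the statement that the bilinear pairing $(f,g)\mapsto\int fg$ is nondegenerate on $L^2$.
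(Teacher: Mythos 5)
Your proof is correct and follows essentially the same route as the paper's: test $f$ against absolutely continuous probability distributions whose square-integrable densities span a dense subspace of $L^2(\K^n)$, conclude $\langle f,g\rangle=0$ for all $g\in L^2(\K^n)$, and hence $f=0$; the paper uses smooth densities where you use normalized nonnegative $L^1\cap L^2$ functions and indicators, and you are more explicit about why the relevant expectations are finite. One small caveat: a nonnegative $g\in L^2(\K^n)$ need not be integrable, so the assertion that $\int fg=0$ for \emph{every} nonnegative $g\in L^2$ (and in particular the choice $g=\overline{f}$) requires an extra truncation $g\mathbf{1}_{\{|x|\le R\}}$ and passage to the limit in $L^2$ --- though your alternative finish via normalized indicators of finite-measure sets is already complete as stated.
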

\begin{proof}
By definition, the statement is equivalent to: Let $f\in L^2 (\K^n)$ be a smooth function. If
$$\langle f,p\rangle = \int_{\K^n} f(x) p(x)\; dx = 0$$
for all smooth functions $p:\K^n\rightarrow \K$ that fulfill $\langle p,1\rangle = 1$ and $p(x)\ge 0$ for all $x\in\K^n$, then $f=0$.

Now if $\langle f,p\rangle = 0$ for all smooth functions $p:\K^n\rightarrow \R$ that fulfill $\int_{\K^n} p(x)\; dx = 1$ and $p(x)\ge 0$, then $\langle f,g\rangle = 0$ for all functions $g\in L^2(\K^n)$, since the span of all such square-integrable $p$ (which includes the simple functions) is dense in $L^2(\K^n)$. This implies $\langle f,f\rangle = \|f\|^2 = 0$, therefore $f = 0$.
\end{proof}

\begin{Lem}
\label{Lem:centered}
Let $f\in L^2 (\K^n)$. If $\mathbb{E}(f(X)) = 0 $ for all centered random variables $X\in\K^n$ for which this expectation is finite, then $f$ is linear in $X$, i.e., of the form $f: x\mapsto  \langle \lambda, x\rangle$ with $\lambda \in \K^n$.
\end{Lem}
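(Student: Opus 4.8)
The plan is to reduce this to Lemma~\ref{Lem:probdense} by subtracting off the linear part of $f$. First I would observe that the hypothesis gives us, in particular, that $\E(f(X)) = 0$ for the degenerate random variable $X \equiv 0$, so $f(0) = 0$; more importantly, testing against all translates of a fixed centered distribution will pin down the first-order behaviour of $f$. Concretely, fix any centered random variable $Z$ with finite second moment and consider, for $v \in \K^n$, the shifted variable $X_v := Z + v$ — this is \emph{not} centered unless $v = 0$, so I cannot apply the hypothesis to it directly. The trick is instead to build genuinely centered test variables out of symmetric pairs: for $v\in\K^n$ let $X_v$ take the value $+v$ with probability $1/2$ and $-v$ with probability $1/2$; then $\E(X_v) = 0$, so the hypothesis yields $f(v) + f(-v) = 0$ for all $v$, i.e. $f$ is odd.

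Next I would upgrade oddness to linearity. For $u,v \in \K^n$, consider the centered three-point (or, over $\C$, appropriately weighted) distribution supported on $\{u, v, -(u+v)\}$ with equal weights $1/3$; its mean is $0$, so the hypothesis gives $f(u) + f(v) + f(-(u+v)) = 0$, and combined with oddness this is exactly Cauchy's functional equation $f(u+v) = f(u) + f(v)$. Since $f\in L^2$ is in particular (Lebesgue-)measurable, a measurable solution of Cauchy's equation on $\K^n$ is $\K$-linear (over $\R$ this is the classical Fréchet–Sierpiński fact; over $\C$ one gets $\R$-linearity this way, and then $\C$-linearity by additionally testing with the two-point distributions on $\{iv, -iv\}$ and their rotations, or simply by noting that the $\R$-linear functional is itself square-integrable-against-densities and reusing the density argument). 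Hence $f(x) = \langle \lambda, x\rangle$ for some $\lambda \in \K^n$.

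The main obstacle I anticipate is the measurability/regularity bookkeeping: Cauchy's functional equation has wildly pathological non-measurable solutions, so the argument genuinely needs that $f \in L^2(\K^n)$ forces measurability, and then invokes the standard theorem that a measurable additive function is linear. An alternative, cleaner route that sidesteps Cauchy entirely — and the one I would probably write up — is to set $\lambda_i := \partial_i(\text{something})$ heuristically but rigorously as follows: define $g(x) := f(x) - \langle \lambda, x\rangle$ where $\lambda$ is \emph{chosen} so that $g$ is centered-orthogonal to all linear functionals, then show $\E(g(X)) = 0$ for \emph{all} finite-expectation $X$ (not just centered ones), because adding a linear term to $f$ exactly compensates the shift of the mean; Lemma~\ref{Lem:probdense} then gives $g = 0$. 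The subtlety there is that $\lambda$ must be well-defined, which requires knowing $x \mapsto \langle \text{const}, x\rangle$ interacts correctly with the $L^2$ pairing used in Lemma~\ref{Lem:probdense}'s proof — i.e. one works on a bounded region or uses weights with all moments finite — so in either presentation the real work is the functional-analytic care around integrability of linear functions against the admissible test densities.
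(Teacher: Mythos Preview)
Your primary route --- testing $f$ against two- and three-point discrete distributions to obtain oddness and then Cauchy's functional equation --- has a gap you did not flag: an element $f\in L^2(\K^n)$ is only an almost-everywhere equivalence class, so the values $f(v)$, $f(-v)$, $f(u+v)$ appearing in $\E(f(X_v))$ are not defined. The paper's proof of the companion Lemma~\ref{Lem:probdense} makes explicit that the hypothesis is really $\int fp=0$ for smooth probability densities $p$; atomic test measures are simply not in that class, so you cannot invoke the assumption on them. One could try to rescue the idea by mollification (replace each point mass by a narrow Gaussian, show the convolved $f$ is continuous and satisfies Cauchy, then pass to an $L^2$ limit), but that is a substantial argument not present in your proposal. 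Your alternative route --- subtract a linear part $\langle\lambda,\cdot\rangle$ and invoke Lemma~\ref{Lem:probdense} on the remainder --- you yourself leave open at exactly the critical step (how to pin down $\lambda$, and how to cope with $x\mapsto\langle\lambda,x\rangle\notin L^2(\K^n)$).

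For comparison, the paper's argument avoids both pointwise evaluation and Cauchy's equation by working on the density side. Writing $\phi_i(x)=x_i$ and $G=\{g\in L^2:\langle\phi_i,g\rangle\text{ finite for all }i\}$, it identifies the span $H\subseteq G$ of centered densities as the kernel of the moment map $g\mapsto(\langle\phi_1,g\rangle,\dots,\langle\phi_n,g\rangle)$, so $H$ has codimension $n$ in $G$; combined with $G^\perp=\{0\}$ from Lemma~\ref{Lem:probdense} this forces $H^\perp$ to be $n$-dimensional, and since the linear functionals $(\K^n)^\vee$ visibly annihilate every centered density they exhaust $H^\perp$. Hence $f\in H^\perp$ is linear. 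This is essentially the ``choose $\lambda$'' step of your second route carried out by a duality/dimension count rather than by an explicit construction.
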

\begin{proof}
Denote $\phi_i: \K^n\rightarrow \K, x\mapsto x_i,$ and denote
$$G=\{g\in L^2 (\K^n)\;:\; \langle \phi_i, g\rangle <\infty \;\mbox{for all}\;1\le i\le n\}.$$
Note that $G$ is a $\K$-vector space. Further denote
$$H=G\cap \mbox{span}\{p\in L^2 (\K^n)\;:\; \langle 1, p\rangle = 1, \langle \phi_i,p\ge 0, p\;\mbox{smooth}\}.$$
Since the square-integrable probability distributions span $L^2 (\K^n)$, it follows that
$$H = \{g\in G\;:\;\langle \phi_i, g\rangle = 0\;\mbox{for all}\;1\le i\le n\}.$$
Therefore, $H$ is the kernel of the linear map
$$\varphi : G\rightarrow \K^n, g\mapsto (\langle \phi_1, g\rangle,\dots, \langle \phi_n, g\rangle).$$
This map is surjective, since e.g.~all Gaussians are in $G$.
By Lemma~\ref{Lem:probdense}, $G^\perp =\{0\}$, therefore $H^\perp$ is contained in a vector space isomorphic to $\K^n$. Since the $n$-dimensional $\K$-vector space $(\K^n)^\vee$ is contained in $H^\perp$, it follows that $H^\perp=(\K^n)^\vee$. From this, the statement follows.
\end{proof}

An application yields the following statement:
\begin{Prop}
\label{Prop:linearform}
Let $f\in L^2 (\K^n)$, let $\beta\in\K^n$. If $\E(f(X)) = \langle \beta, \E(X)\rangle $ for all $\K^n$-valued random variables $X$ for which the expectation $\E(f(X))$ is finite, then $f$ is of the form $f: x \mapsto \langle \beta , x  \rangle.$
\end{Prop}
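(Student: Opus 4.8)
The plan is to reduce Proposition~\ref{Prop:linearform} directly to Lemma~\ref{Lem:centered} by a centering argument. Set $g := f - \langle \beta, \cdot\rangle$, which is again in $L^2(\K^n)$ since $\langle\beta,\cdot\rangle$ is (locally) square-integrable and the hypothesis only concerns random variables for which the relevant expectations are finite. The hypothesis says that $\E(f(X)) = \langle\beta,\E(X)\rangle$ whenever $\E(f(X))$ is finite; by linearity of expectation this is exactly $\E(g(X)) = 0$ for all such $X$. In particular, for every \emph{centered} random variable $X$ (those with $\E(X) = 0$) for which $\E(g(X))$ is finite, we get $\E(g(X)) = 0$.

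Next I would apply Lemma~\ref{Lem:centered} to $g$: since $\E(g(X)) = 0$ for all centered $X$ with finite expectation, the lemma gives that $g$ is linear, i.e.\ $g(x) = \langle\lambda, x\rangle$ for some $\lambda\in\K^n$. It then remains to identify $\lambda$ with $\beta$. For this I would test against a non-centered random variable: pick $X$ to be (say) a Gaussian with mean $\mu$ and small covariance, so that $\E(X) = \mu$ and all the expectations in sight are finite. The hypothesis gives $\E(f(X)) = \langle\beta,\mu\rangle$, while $f = g + \langle\beta,\cdot\rangle = \langle\lambda+\beta,\cdot\rangle$ yields $\E(f(X)) = \langle\lambda+\beta,\mu\rangle$. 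Comparing, $\langle\lambda,\mu\rangle = 0$; since $\mu$ ranges over all of $\K^n$ (Gaussians of arbitrary mean lie in the class allowed by the hypothesis), we conclude $\lambda = 0$, so $f(x) = \langle\beta, x\rangle$.

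The only genuine point to be careful about is the bookkeeping on integrability: one must check that subtracting $\langle\beta,\cdot\rangle$ does not spoil membership in the relevant function class and that the family of test distributions used (centered ones for the lemma, shifted Gaussians for the identification step) really does lie inside the hypothesis's quantifier. Both are immediate from the way the classes $G$ and $H$ were set up in the proof of Lemma~\ref{Lem:centered}, so I expect no real obstacle here — the proposition is essentially a one-line corollary of Lemma~\ref{Lem:centered} once the centering substitution is made explicit.
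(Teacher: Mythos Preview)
Your proposal is correct and follows essentially the same route as the paper: define $g = f - \langle\beta,\cdot\rangle$, apply Lemma~\ref{Lem:centered} to obtain $g(x)=\langle\lambda,x\rangle$, and then vary the mean to force $\lambda=0$. The paper phrases the last step slightly differently---writing $f(x)=\langle\beta,\E(X)\rangle+\langle\lambda,x-\E(X)\rangle$ and arguing that the coefficient of $\E(X)$ must vanish---but this is the same identification argument you carry out with shifted Gaussians; your integrability bookkeeping remark is also apt, since the paper glosses over the fact that $\langle\beta,\cdot\rangle\notin L^2(\K^n)$ in the same way.
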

\begin{proof}
Applying Lemma~\ref{Lem:centered} for the function $f-\langle \beta, . \rangle$ and the random variable $X - \E(X)$ yields that $f$ is of the form
$$f: x\mapsto \langle \beta, \E(X)\rangle + \langle \lambda,  x - \E (X)\rangle.$$
Since $f$ is only a function of $x$ and not of $\E(X)$ which can vary, the coefficient of $\E(X)$, which is equal to $\beta - \lambda$, must vanish, thus $\beta = \lambda$. Substituting yields the claim.
\end{proof}

\begin{Thm}
\label{Thm:mainthm_linest}
Let $A\in \K^{N\times n}$ and $x\in \K^n$. Let $\varepsilon$ be a centered and $\K^n$-valued random variable, let $b = Ax + \varepsilon$. For $c\in \spn A$, let $\widehat{\gamma}$ be an estimator for $\langle w,x\rangle$ of the form $\widehat{\gamma} = f(b)$ with $f\in L^2 (\K^n)$. If $\E (\widehat{\gamma}) = \langle w,x\rangle$ for all choices of $x$, and all choices of $\varepsilon$ for which this expectation is finite, then $\widehat{\gamma}$ is of the form $\widehat{\gamma} (D)$, as described in Proposition~\ref{Prop:onecirc}.
\end{Thm}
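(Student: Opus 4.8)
The plan is to reduce the general statement to Proposition~\ref{Prop:linearform}, by first using universal unbiasedness over all $x$ and all centered $\varepsilon$ to pin down the functional form of $f$, and then using the structure of the regression matroid to identify that form with a divisor estimator. First I would fix any $x_0\in\K^n$ and observe that, writing $b = Ax_0 + \varepsilon$, the random variable $b$ ranges (as $\varepsilon$ varies over all centered distributions) over all distributions centered at $Ax_0$; since $Ax_0$ ranges over $\spn A^\top$ (the row span), and $\widehat\gamma = f(b)$ is required to satisfy $\E(f(b)) = \langle w,x\rangle$ for all such choices, I want to re-express the right-hand side as a linear functional of $\E(b) = Ax_0$. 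This is exactly where $w\in\spn A$ enters: because $w$ lies in the row span of $A$, there is some $\beta\in\K^N$ with $w = \beta A$ — equivalently $\langle w,x\rangle = \langle \beta, Ax\rangle = \langle\beta,\E(b)\rangle$ — so the unbiasedness hypothesis reads $\E(f(b)) = \langle\beta,\E(b)\rangle$ for all admissible distributions of $b$.

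At this point I would invoke Proposition~\ref{Prop:linearform} (with the roles: the random variable there is $b$, living effectively in the affine span of $A$), concluding that $f$ must be the linear form $f: b\mapsto \langle\beta, b\rangle$. Strictly speaking Proposition~\ref{Prop:linearform} as stated lets $X$ range over \emph{all} $\K^n$-valued random variables, whereas here $b$ is constrained to have mean in $\spn A^\top$; so the cleanest route is to apply it on the subspace $V := \spn A^\top \subseteq \K^N$ (noting $b = Ax_0 + \varepsilon$ and that we may take $\varepsilon$ supported on $V$, or alternatively argue that $f$ is only determined up to the orthogonal complement and choose the representative supported on $V$). Once $f(b) = \langle\lambda, b\rangle$ for some $\lambda\in\K^N$ with $\lambda A = w$, Proposition~\ref{Prop:onecirc} (converse part) and Lemma~\ref{Lem:circuit-spaces} say precisely that any such $\lambda$ is the circuit vector of some particular regression divisor $D$, i.e.\ $\widehat\gamma = \widehat\gamma(D)$, which is the claim.

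The main obstacle I anticipate is the bookkeeping around the \emph{domain} on which $b$ is allowed to vary: Proposition~\ref{Prop:linearform} is phrased for random variables unconstrained in $\K^n$, but the $b$ arising here always has expectation in the row span of $A$, so $f$ is only pinned down on that affine subspace and only the restriction of $\beta$ modulo $(\spn A^\top)^\perp$ is determined. One must therefore either (a) restrict attention to $\varepsilon$ (hence $b$) supported on $\spn A^\top$ and run the density/completeness argument of Lemmas~\ref{Lem:probdense}–\ref{Lem:centered} inside that subspace, or (b) observe that changing $f$ by a function vanishing on $\spn A^\top + \supp(\varepsilon)$ does not affect the estimator and choose the canonical linear representative. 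A secondary (routine) point is checking that $w\neq 0$ together with $w\in\spn A$ guarantees a nonzero $\beta$, so that the resulting $\lambda$ indeed defines a \emph{particular} (not general) regression divisor in the sense of Lemma~\ref{Lem:circuit-spaces}; this is immediate since $\lambda A = w \neq 0$. Modulo these technicalities the argument is a direct chain: $w\in\spn A \Rightarrow$ unbiasedness becomes a linear-functional identity in $\E(b) \Rightarrow$ Proposition~\ref{Prop:linearform} forces $f$ linear $\Rightarrow$ Proposition~\ref{Prop:onecirc}/Lemma~\ref{Lem:circuit-spaces} identifies it with $\widehat\gamma(D)$.
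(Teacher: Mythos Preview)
Your approach is essentially the same as the paper's: choose $\beta$ with $\beta A = w$, rewrite unbiasedness as $\E(f(b)) = \langle\beta,\E(b)\rangle$, invoke Proposition~\ref{Prop:linearform} with $X=b$, and then identify the resulting linear form with a divisor estimator via Proposition~\ref{Prop:onecirc}. Your discussion of the domain issue (that $\E(b)$ is a priori constrained to the column space of $A$, so Proposition~\ref{Prop:linearform} must be applied on that subspace or $f$ taken modulo the complement) is in fact more careful than the paper's own proof, which simply applies Proposition~\ref{Prop:linearform} directly without commenting on this point.
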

\begin{proof}
Since $w\in \spn A$, there exists $\beta\in \K^N$ such that $\beta^\top AX  = \langle w,x\rangle$. Linearity of expectation implies $\E (\langle \beta, b\rangle ) = \langle w,x\rangle$. Taking $X=b$ and this $\beta$ in Proposition~\ref{Prop:linearform} yields the claim.
\end{proof}

\section*{Acknowledgments}
LT is supported by the European Research Council under the European Union’s Seventh Framework Programme (FP7/2007-2013) / ERC grant agreement no 247029- SDModels.  This research was carried out at MFO, supported by FK's Oberwolfach Leibniz Fellowship.

\bibliographystyle{abbrvnat}
{\small
\bibliography{linear_matroidkernel}}

\end{document}